\documentclass[12pt]{amsart}
\usepackage{amsmath,amssymb,amsbsy,amsfonts,latexsym,amsopn,amstext,cite,
                                               amsxtra,euscript,amscd,bm,mathabx}
\usepackage{url}
\usepackage[colorlinks,linkcolor=blue,anchorcolor=blue,citecolor=blue,backref=page]{hyperref}
\usepackage{color}
\usepackage{graphics,epsfig}
\usepackage{graphicx}
\usepackage{float} 
\usepackage[english]{babel} 
\usepackage{mathtools}
\usepackage{todonotes}
\usepackage{url}
\usepackage[colorlinks,linkcolor=blue,anchorcolor=blue,citecolor=blue,backref=page]{hyperref}

\def\J{{\mathrm{Jac}}}

\usepackage[norefs,nocites]{refcheck}

\hypersetup{breaklinks=true}

\usepackage[norefs,nocites]{refcheck}
\usepackage[english]{babel}
\begin{document}

\newtheorem{thm}{Theorem}
\newtheorem{lem}[thm]{Lemma}
\newtheorem{claim}[thm]{Claim}
\newtheorem{cor}[thm]{Corollary}
\newtheorem{prop}[thm]{Proposition} 
\newtheorem{definition}[thm]{Definition}
\newtheorem{rem}[thm]{Remark} 
\newtheorem{question}[thm]{Open Question}
\newtheorem{qn}[thm]{Question}
\newtheorem{conj}[thm]{Conjecture}
\newtheorem{prob}{Problem}

\newcommand{\GL}{\operatorname{GL}}
\newcommand{\SL}{\operatorname{SL}}
\newcommand{\lcm}{\operatorname{lcm}}
\newcommand{\ord}{\operatorname{ord}}
\newcommand{\Op}{\operatorname{Op}}
\newcommand{\Tr}{\operatorname{Tr}}
\newcommand{\Nm}{\operatorname{Nm}}

\numberwithin{equation}{section}
\numberwithin{thm}{section}
\numberwithin{table}{section}

\def\vol {{\mathrm{vol\,}}}
\def\squareforqed{\hbox{\rlap{$\sqcap$}$\sqcup$}}
\def\qed{\ifmmode\squareforqed\else{\unskip\nobreak\hfil
\penalty50\hskip1em\null\nobreak\hfil\squareforqed
\parfillskip=0pt\finalhyphendemerits=0\endgraf}\fi}

\def \balpha{\bm{\alpha}}
\def \bbeta{\bm{\beta}}
\def \bgamma{\bm{\gamma}}
\def \blambda{\bm{\lambda}}
\def \bchi{\bm{\chi}}
\def \bphi{\bm{\varphi}}
\def \bpsi{\bm{\psi}}
\def \bomega{\bm{\omega}}
\def \btheta{\bm{\vartheta}}
\def \ochi{\overline{\chi}}
\def \h{\widehat h}

\def\ovG {\overline{\Gamma}}

\def\uu {\mathbf{u}} 
\def\uv {\mathbf{v}} 
\def\uk {\mathbf{k}} 
\def\ut {\mathbf{t}}

\def\eps{\varepsilon}

\newcommand{\bfxi}{{\boldsymbol{\xi}}}
\newcommand{\bfrho}{{\boldsymbol{\rho}}}

\def\Kab{\sfK_\psi(a,b)}
\def\Kuv{\sfK_\psi(u,v)}
\def\SaUV{\cS_\psi(\balpha;\cU,\cV)}
\def\SaAV{\cS_\psi(\balpha;\cA,\cV)}

\def\SUV{\cS_\psi(\cU,\cV)}
\def\SAB{\cS_\psi(\cA,\cB)}

\def\Kmnp{\sfK_p(m,n)}

\def\KKap{\cH_p(a)}
\def\KKaq{\cH_q(a)}
\def\KKmnp{\cH_p(m,n)}
\def\KKmnq{\cH_q(m,n)}

\def\Klmnp{\sfK_p(\ell, m,n)}
\def\Klmnq{\sfK_q(\ell, m,n)}

\def \SALMNq {\cS_q(\balpha;\cL,\cI,\cJ)}
\def \SALMNp {\cS_p(\balpha;\cL,\cI,\cJ)}

\def \SACXMQX {\fS(\balpha,\bzeta, \bxi; M,Q,X)}

\def\SAMJp{\cS_p(\balpha;\cM,\cJ)}
\def\SAMJq{\cS_q(\balpha;\cM,\cJ)}
\def\SAqMJq{\cS_q(\balpha_q;\cM,\cJ)}
\def\SAJq{\cS_q(\balpha;\cJ)}
\def\SAqJq{\cS_q(\balpha_q;\cJ)}
\def\SAIJp{\cS_p(\balpha;\cI,\cJ)}
\def\SAIJq{\cS_q(\balpha;\cI,\cJ)}

\def\RIJp{\cR_p(\cI,\cJ)}
\def\RIJq{\cR_q(\cI,\cJ)}

\def\TWXJp{\cT_p(\bomega;\cX,\cJ)}
\def\TWXJq{\cT_q(\bomega;\cX,\cJ)}
\def\TWpXJp{\cT_p(\bomega_p;\cX,\cJ)}
\def\TWqXJq{\cT_q(\bomega_q;\cX,\cJ)}
\def\TWJq{\cT_q(\bomega;\cJ)}
\def\TWqJq{\cT_q(\bomega_q;\cJ)}

 \def \xbar{\overline x}
  \def \ybar{\overline y}

\def\cA{{\mathcal A}}
\def\cB{{\mathcal B}}
\def\cC{{\mathcal C}}
\def\cD{{\mathcal D}}
\def\cE{{\mathcal E}}
\def\cF{{\mathcal F}}
\def\cG{{\mathcal G}}
\def\cH{{\mathcal H}}
\def\cI{{\mathcal I}}
\def\cJ{{\mathcal J}}
\def\cK{{\mathcal K}}
\def\cL{{\mathcal L}}
\def\cM{{\mathcal M}}
\def\cN{{\mathcal N}}
\def\cO{{\mathcal O}}
\def\cP{{\mathcal P}}
\def\cQ{{\mathcal Q}}
\def\cR{{\mathcal R}}
\def\cS{{\mathcal S}}
\def\cT{{\mathcal T}}
\def\cU{{\mathcal U}}
\def\cV{{\mathcal V}}
\def\cW{{\mathcal W}}
\def\cX{{\mathcal X}}
\def\cY{{\mathcal Y}}
\def\cZ{{\mathcal Z}}
\def\Ker{{\mathrm{Ker}}}
\def\g{{\mathrm{gcd}}}
\def\rad{{\mathrm{rad}}}
\def\rank{{\mathrm{rank}}}

\def\Pse {\mathsf E}
\def\Psf {\mathsf P}
\def\Qsf {\mathsf Q}
\def\fsf {\mathsf f}
\def\ksf {\mathsf k}
\def\hsf {\mathsf h}
\def\isf{\mathsf O}

\def\Dsf {\mathsf D}

\def\NmQR{N(m;Q,R)}
\def\VmQR{\cV(m;Q,R)}

\def\Xm{\cX_m}

\def \A {{ \mathbb A}}
\def \B {{ \mathbb B}}
\def \C {{ \mathbb C}}
\def \N {{ \mathbb N}}
\def \F {{ \mathbb F}}
\def \G {{\mathbb G}}
\def \L {{\mathbb L}}
\def \K {{\mathbb K}}
\def \PP {{\mathbb P}}
\def \Q {{\mathbb Q}}
\def \R {{\mathbb R}}
\def \Z {{\mathbb Z}}
\def \fS{\mathfrak S}

\def\e{{\mathbf{\,e}}}
\def\ep{{\mathbf{\,e}}_p}
\def\eq{{\mathbf{\,e}}_q}
\def\er{{\mathbf{\,e}}_R}

\def\\{\cr}
\def\({\left(}
\def\){\right)}
\def\fl#1{\left\lfloor#1\right\rfloor}
\def\rf#1{\left\lceil#1\right\rceil}

\def\Tr{{\mathrm{Tr}}}
\def\Nm{{\mathrm{Nm}}}
\def\Im{{\mathrm{Im}}}

\def \oF {\overline \F}

\newcommand{\pfrac}[2]{{\left(\frac{#1}{#2}\right)}}

\def \Prob{{\mathrm {}}}
\def\e{\mathbf{e}}
\def\ep{{\mathbf{\,e}}_p}
\def\epp{{\mathbf{\,e}}_{p^2}}
\def\em{{\mathbf{\,e}}_m}

\def\Res{\mathrm{Res}}
\def\Orb{\mathrm{Orb}}

\def\vec#1{\mathbf{#1}}
\def \va{\vec{a}}
\def \vb{\vec{b}}
\def \vm{\vec{m}}
\def \vu{\vec{u}}
\def \vv{\vec{v}}
\def \vx{\vec{x}}
\def \vy{\vec{y}}
\def \vz{\vec{z}}
\def\flp#1{{\left\langle#1\right\rangle}_p}
\def\T {\mathsf {T}}

\def\sfG {\mathsf {G}}
\def\sfK {\mathsf {K}}

\def\mand{\qquad\mbox{and}\qquad}

\title[Product of denominators of elliptic curves]
{Perfect powers in the product of denominators of elliptic curves}

\author{Subham Bhakta}
\address{School of Mathematics and Statistics, University of New South Wales, Sydney, NSW 2052, Australia.} 
\email{subham.bhakta@unsw.edu.au}

\dedicatory{Dedicated to the 70th birthday of Igor E. Shparlinski}

\begin{abstract}
We use sieving arguments to estimate the frequency of $s$-tuples of rational points 
$$(P_1,\dots,P_s)\in E_1(\Q)\times\cdots\times E_s(\Q),$$ where $E_1,\dots,E_s$ are (not necessarily distinct) elliptic curves over $\Q$, for which the product of their denominators is a perfect $\ell$th power for a fixed prime $\ell$. We consider two settings: one in which the points are of the form $n_iP_i+Q_i$ with $n_i$ ranging over an interval, and another in which we take arbitrary points of bounded canonical height. In the special case where all $Q_i$ are the points at infinity, we also obtain better estimates by using a version of the elliptic sieve with elliptic divisibility sequences. Consequently, we derive analogues of these results for various rational functions, providing elliptic analogues of R.~de la Bretèche, P.~Kurlberg and I.~E.~Shparlinski (2021).
\end{abstract}

\subjclass[2020]{11D61 (primary), 11B39, 11G05, 11N36 (secondary)}
\keywords{Elliptic curves, Elliptic divisibility sequences, Elliptic sieve, Fermat curves}
\maketitle
\tableofcontents

\section{Introduction}
The study of perfect powers occurring among the terms, or among the products of terms, of various recurrence sequences has a long and rich history in Diophantine number theory. A classical result of Erdős and Selfridge~\cite{ES75} states that the Diophantine equation  
\[
n(n+1)\cdots(n+s-1)=m^{\ell},
\]
admits no solutions in integers $n\ge 0$ and $s,\ell\ge 2$. Of course, to study such questions, it is always enough to assume that $\ell$ is a prime. Thus, throughout the paper, the exponent $\ell$ is reserved as a prime. The related questions for products of the terms of an arithmetic progression have also been extensively studied; see for example \cite{B23,BBGH06,G13,HTT09}, and the references therein.

The case in which the factors arise from terms of linear recurrence sequences has also been extensively studied. In the special case of a single term, that is determining whether a term of a linear recurrence sequence can be a perfect power, the problem has received considerable attention. Notable contributions include those of Pethő \cite{P82}, Shorey and Stewart \cite{SS87}, Corvaja and Zannier~\cite{CZ02}, and Fuchs and Tichy \cite{FT03}. To the best of our knowledge, the most general result in this direction, building on several previous works, is due to Bugeaud and Kaneko~\cite{BK19}.

On the other hand, Luca and Shorey \cite{LS05} have obtained an effective upper bound for the solutions of equations in which a product of the terms of a Lucas sequence is a perfect power. This has been extended by Bravo, Das, Guzmán, and Laishram \cite{BDGL15} to certain special cases, specifically for the Pell and Pell-Lucas sequences, where they explicitly determined all the solutions. For further historical background and additional developments, we refer the reader to these works and the references therein.

We focus on Diophantine equations of the type
\[
f(n_1)\cdots f(n_s)=m^{\ell},
\]
where $f(n)$ is an arithmetic function taking integer or rational values, and each $n_i$ is an integer in an interval $(M,M+N]$. In the natural setting $f(n)=n$, such equations have received considerable attention. In particular, the case $s=3$ and $\ell=3$ has been studied by de la Bret\`eche~\cite{B98}, Fouvry~\cite{F98}, and Heath-Brown and Moroz~\cite{BM99}. Further work includes Tolev~\cite{Tol11} for $s=2$ and $\ell>2$, and more recently de la Bret\`eche, Kurlberg, and Shparlinski~\cite{BKS21} in the general case. In this paper, we turn to \textit{sparse} sequences $\(f(n)\)_{n\ge 1}$ arising from certain nonlinear recursions, notably \textit{elliptic divisibility sequences}, as well as the sequences associated with rational points on elliptic curves.

\subsection{Set-up} 
For an  elliptic curve $E$ given by a short Weierstrass equation 
\begin{equation}\label{eq:Weier}
y^2   = x^3 + a x + b
\end{equation}
with integral coefficients $a$ and $b$, we denote by $E(\Q)$ the group of rational points of $E$, and $O$ denotes the point at infinity, see~\cite{Silv-Book} for background. We can write any point $P \in E(\Q)$, in the lowest form 
\begin{equation}\label{eqn:P}
P =(x(P),y(P))=\left(\frac{a_P}{d_P^2},\frac{b_P}{d_P^3}\right),
\end{equation}
where $d_P \in\N$, and $\gcd(a_Pb_P , d_P)=1$. 

Everest, Reynolds, and Stevens \cite{ERS} have shown that, for any fixed $\ell$, there are only finitely many perfect $\ell$th power among the denominators $d_P$. They have also shown the same uniformly over $\ell$, assuming that the $abc$-conjecture holds. Unconditionally, first Reynolds~\cite{R12} and then Alfaraj~\cite{A23} have studied this uniformly over the exponent $\ell$, for certain elliptic curves. More recently, Nowroozi and Siksek \cite{NS} further extended these results over a much broader family of elliptic curves.

We are interested in the product of the denominators; in particular, we study how often such products are perfect powers. Hajdu, Laishram and Szikszai in \cite[Theorem 1]{HLS} have studied this over denominators of multiples of a single fixed point. That is, the terms are from a fixed \textit{elliptic divisibility sequence}. More precisely, they showed that there are finitely many perfect $\ell$th powers among the products
$$d_{nP}d_{(n+h)P} \cdots d_{(n+(s-1)h)P},\quad n,s,h \in \mathbb{N},$$
and in Theorem~\ref{thm:singlevar}, we record an analogue for arbitrary shifts.

However, the arguments of Hajdu, Laishram and Szikszai do not work when the points are of the form $nP+Q$, where $P,Q$ are fixed rational points, and $n$ ranges over an interval. One of the key obstacles is that, we no longer have the required $p$-adic properties (as in \cite[Lemma~2.1]{HLS}) of these points. Similarly, the same arguments also do not work when we consider the products over all $s$-tuples of rational points. Recently, the products of denominators of rational points of this kind have been considered by B{\'e}rczes, Hajdu, Ostafe, Shparlinski and the present author in \cite{BBHOS26}, and they have shown that the denominators of such points are \textit{rarely} multiplicatively dependent. Building upon \cite{BBHOS26} and \cite{HLS}, we study the frequency of tuples of rational points, whose product of denominators is multiplicatively related to a perfect $\ell$th power. For any prime $\ell$, we denote 
$$\cN_{\ell}=\{n^{\ell}:n\in \N\}\quad \mathrm{and}\quad \cQ_{\ell}=\{x^{\ell}:x\in \Q\}.$$

Assume that we are given $s$ not necessarily distinct elliptic curves $E_1, \ldots, E_s$ as in \eqref{eq:Weier} over $\Q$ of positive ranks, and a prime $\ell$. For fixed $s$-tuples 
\begin{equation}
\label{eq: P Q}
\Psf=(P_1, \ldots, P_s) \mand \Qsf=(Q_1, \ldots, Q_s),  
\end{equation}
of non-torsion points $P_i\in  E_i(\Q)$ and arbitrary 
points $Q_i \in E_i(\Q)$, $i =1, \ldots, s$, we estimate the size of the following set
\begin{align*}
&\cD_{\ell,\Psf,\Qsf}(M,N)\\
&\quad=\{ (n_1, \ldots, n_s)\in   (M,M+N]^s:\\
&\qquad\quad\quad d_{n_1P_1 + Q_1}^{k_1} \cdots d_{n_sP_s+Q_s}^{k_s} \in \cN_{\ell},~\text{for some }(k_1,\ldots,k_s)\in [1,\ell-1]^s \}.
\end{align*}

Furthermore, we denote by $\h(P)$ the {\it canonical height\/} of a rational point $P\in E(\Q)$, see for instance~\cite[Section~VIII.9]{Silv-Book}. Then we also estimate the size of the following set, 
\begin{align*}
&\cD_{\ell}(H)\\ 
&\quad= \{ (P_1, \ldots, P_s):P_i \in E_{i}(\Q),\ \h(P_i) \le H, \ i =1, \ldots, s,\ \text{and}\\
& \qquad\qquad\quad\quad d_{P_1}^{k_1} \cdots d_{P_s}^{k_s} \in \cN_{\ell},~\text{for some }(k_1,\ldots,k_s)\in [1,\ell-1]^s\}.
\end{align*}

\subsection{Main results}\label{sec:results}

\begin{thm}\label{thm:prod}
Let $s\ge 2$ be a fixed integer and $\ell$ be a fixed prime. Then uniformly over $M \ge 0$ and $\Qsf=(Q_1,\ldots, Q_s)\in E_1(\Q)\times \cdots E_s(\Q)$, we have
\[
\sharp\, \cD_{\ell,\Psf,\Qsf}(M,N) \;\ll\; \frac{N^s}{(\log N)^{\lceil s/2 \rceil/3}}.
\]
\end{thm}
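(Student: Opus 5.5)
We would prove Theorem~\ref{thm:prod} with a large-sieve style argument in the spirit of de la Bret\`eche, Kurlberg and Shparlinski~\cite{BKS21}: detect primes dividing exactly one denominator to an exponent not divisible by $\ell$. The key local input is reduction modulo a prime $p$ of good reduction for all $E_i$. We have $p\mid d_{nP_i+Q_i}$ if and only if $nP_i+Q_i\equiv O \pmod p$. Let $t_i(p)$ be the order of $P_i$ in $E_i(\F_p)$. Then this condition holds either for no $n$, or exactly for $n$ in a single residue class modulo $t_i(p)$. The second case occurs precisely when $Q_i$ reduces into the cyclic subgroup generated by $P_i$.

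We would use the following $p$-adic fact (elliptic formal group). If $p\nmid t_i(p)$ and $p$ is large, then for $n\equiv n_0 \pmod{t_i(p)}$ the valuation $v_p(d_{nP_i+Q_i})$ equals $v_p(d_{n_0'P_i+Q_i})$ plus $v_p$ of $(n-n_0')/t_i(p)$, up to normalization. Hence among $n$ in the residue class, the proportion with $v_p(d)\equiv 0\pmod \ell$ beyond the trivial one is $\ll 1/p$. In the uniform-in-$Q_i$ setting it is cleaner simply to work with primes $p$ for which $p\,\|\,d$, i.e.\ we restrict to $n$ with $v_p(d_{nP_i+Q_i})\in\{0,1\}$ and discard the rest.

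The plan is as follows. Pair the coordinates into $\lceil s/2\rceil$ disjoint groups, where one group may be a singleton handled with an extra index. For a fixed coordinate $i$, consider the set $\cP_i$ of primes $p\le N^{\delta}$ for which $t_i(p)$ is large, say $t_i(p)\ge p^{1/2}$, which holds for most primes by standard results on orders of reductions. If the product is an $\ell$th power, then every prime $p$ dividing $d_{n_iP_i+Q_i}$ exactly once must divide some other $d_{n_jP_j+Q_j}$. We then apply a multidimensional large sieve (or simply Tur\'an-type second moment counting). For a typical tuple, the number of $p\in\cP_i$ with $p\| d_{n_iP_i+Q_i}$ is about $\sum_{p\in\cP_i} 1/t_i(p)$. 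The requirement that all these primes be shared with other coordinates forces coincidences $n_i\equiv \cdot \pmod{t_i(p)}$ and $n_j\equiv \cdot\pmod{t_j(p)}$ for each such $p$. We would bound via Cauchy--Schwarz the count of tuples violating this, obtaining a saving of $(\log N)^{-c}$ per independent group. This explains the exponent $\lceil s/2\rceil$: each coordinate can ``hide'' its primes in at most one partner, so only about half the coordinates yield independent savings.

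The main obstacle is uniformity in $\Qsf$. The set of primes for which $Q_i$ lies in $\langle P_i\rangle \bmod p$ could be thin, making $\sum 1/t_i(p)$ small. We would handle this as follows. We expect $Q_i\in\langle P_i\rangle \bmod p$ for a positive proportion of primes when the orders $t_i(p)$ are of full size, which is a Chebotarev-type statement. Where only a weaker statement is available, we would use the fallback that for primes with $t_i(p)\le p^{1/3}$... Failing that, we would instead bound the density of primes with $t_i(p)$ small by $\log\log$ savings. This should yield $\sum_{p\in\cP_i}1/t_i(p)\gg \tfrac13\log\log N$, and the exponential of this in the sieve gives the factor $(\log N)^{-1/3}$ per group, matching the stated bound.
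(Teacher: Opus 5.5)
Your proposal has a genuine gap, and it sits at the point you yourself flag as the main obstacle.

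\textbf{Supply of primes.} A sieve over small primes saves anything for coordinate $i$ only if many primes $p\le N^{\delta}$ actually divide some $d_{nP_i+Q_i}$. That means $Q_i \bmod p\in\langle P_i \bmod p\rangle$. So the quantity driving any such sieve is
$\mu_i=\sum_{p}\mathbf{1}[Q_i\in\langle P_i\rangle \bmod p]/t_i(p)$, not $\sum_p 1/t_i(p)$, and you give no lower bound for $\mu_i$. The positive density you ``expect'' is of elliptic Artin (Lang--Trotter) type and is not available unconditionally. The requirement is also stronger than that: the theorem is uniform in $\Qsf$, so you need the bound for every $Q_i$. The primes at which membership is automatic for all $Q_i$ are those where $\langle P_i \bmod p\rangle$ contains the whole reduction of $E_i(\Q)$, and their density is itself an Artin-type problem. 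Controlling primes with small $t_i(p)$ says nothing about membership. Your fallback sentences are unfinished, and the constant $\tfrac13$ in $\mu_i\gg\tfrac13\log\log N$ is read off from the target rather than derived.

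\textbf{Size of the saving.} Even granting $\mu_i\asymp\log\log N$, a Tur\'an-type second moment with Cauchy--Schwarz saves only a factor $\mu_i^{-1}\asymp(\log\log N)^{-1}$, not $e^{-\mu_i}$. To get the exponential you would need a genuine upper-bound sieve with the non-prime moduli $t_i(p)$, which you do not set up.

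\textbf{Sharing step.} Different primes of $d_{n_iP_i+Q_i}$ can be absorbed by different coordinates, depending on the tuple. So a fixed pairing into $\lceil s/2\rceil$ groups is not legitimate; you need a covering statement such as Lemma~\ref{lem:GraphCover}. Also, with only $t(p)\ge p^{1/2}$, even for both coordinates, the natural bound $\sum_{p}(N/t_i(p)+1)(N/t_j(p)+1)$ for pairs sharing a prime of $\cP_i$ is only $O(N^2\sum_{p\le N^\delta}1/p)$. That is no saving at all.

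\textbf{What the paper does instead.} The paper's proof never asks which small primes divide $d_{nP_i+Q_i}$, which is why it is uniform in $\Qsf$. It fixes a threshold $R$ on the index of appearance and splits the tuples into two cases.
\begin{enumerate}
\item Suppose for some $i$ every prime $p$ with $\nu_p(d_{n_iP_i+Q_i})\not\equiv 0\pmod\ell$ lies in $\cW(R)$. Then $d_{n_iP_i+Q_i}\in\cO^{*}_{\Q,\cW(R)}\cdot\cN_\ell$. Lemma~\ref{lem:powerwithsunit} bounds the number of \emph{all} such points of $E_i(\Q)$ by $\exp(O(L(\cW(R))))$. This is a Diophantine input: Siegel's factorisation leads to the curves $ux^{2\ell}+vy^{2\ell}=1$ with $\cW$-unit coefficients, which are counted by Dimitrov--Gao--Habegger together with a rank bound for their Jacobians. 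Your proposal has no substitute for it.
\item Otherwise every coordinate has such a prime outside $\cW(R)$, and that prime must reappear in another coordinate. The resulting graph has no isolated vertex, so Lemma~\ref{lem:GraphCover} gives a dominating set of size at most $\lfloor s/2\rfloor$. Each of the remaining at least $\lceil s/2\rceil$ coordinates then has only $O(N/R+1)$ choices by Lemma~\ref{lem:divbyp}.
\end{enumerate}
Since $L(\cW(R))\ll R^3$ (Lemma~\ref{lem:LandW}), balancing $N^{s-1}\exp(O(R^3))$ against $N^sR^{-\lceil s/2\rceil}$ forces $R\asymp(\log N)^{1/3}$. That balance, not a density of primes, is the source of the exponent $\lceil s/2\rceil/3$.
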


Furthermore, we have a better estimate over the products of the terms of \textit{elliptic divisibility sequences}, that is, when $Q_i=O_i$ for each $i=1,\ldots,s$, where $O_i$ denotes the point at infinity for $E_i$.

\begin{thm}\label{thm:overEDS}
    Let $s\ge 2$ be a fixed integer and $\ell$ be a fixed prime. Then uniformly over $M \ge 0$, we have
\begin{equation}\label{eqn:generalM}
\sharp\, \cD_{\ell,\Psf,\isf}(M,N) \;\ll\; N^s\( \frac{\log \log N}{\log N}\)^s,
\end{equation}
where we denote $\isf=(O_1,\ldots, O_s)$, and for $M=0$ we have
\begin{equation}\label{eqn:M0}
\sharp\, \cD_{\ell,\Psf,\isf}(0,N) \;\le\; \frac{N^s}{\exp\(\( \frac{1}{\sqrt{2}} \lceil \frac{s}{2} \rceil +o(1)\)(\log N \log \log N)^{1/2}\)}.
\end{equation}
\end{thm}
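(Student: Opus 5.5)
Both bounds rest on the divisibility structure of elliptic divisibility sequences. For each $i$ the denominators $D_{i,n}=d_{nP_i}$ form a strong divisibility sequence: $\gcd(D_{i,m},D_{i,n})=D_{i,\gcd(m,n)}$. Moreover, for all but finitely many primes $p$ (those of bad reduction, or dividing the denominators of $P_i$), we have $p\mid D_{i,n}$ exactly when $r_i(p)\mid n$. Here $r_i(p)$ is the rank of apparition, namely the order of $P_i$ in $\widetilde E_i(\F_p)$.

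We also need the valuation formula $v_p(D_{i,mr_i(p)}) = v_p(D_{i,r_i(p)})+v_p(m)$, valid for $p\nmid 6\Delta_i$ and coming from the formal group. Hence a prime $p$ with $r_i(p)\mid n$, $v_p(n)=0$ and $v_p(D_{i,r_i(p)})=1$ divides $D_{i,n}$ to exactly the first power. This holds for primitive divisors that are not Wieferich-type primes.

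If the product $\prod_i D_{i,n_i}^{k_i}$ is an $\ell$th power with $1\le k_i\le \ell-1$, then every prime $p$ must have $\sum_i k_i v_p(D_{i,n_i})\equiv 0 \pmod \ell$. The plan is to show that typically some $D_{i,n_i}$ has a prime $p$ dividing it exactly once and dividing no other $D_{j,n_j}$, since such a prime violates this congruence.

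For \eqref{eqn:generalM} I argue coordinate-wise. Fix $i$ and a parameter $z=N^{c/\log\log N}$, or similar. Consider the set $\cP_i$ of primes $p$ that have rank $r_i(p)=p'$ for another prime $p'\in(z,N^{1/2}]$ and that divide $D_{i,p'}$ exactly once, e.g.\ primitive divisors of $D_{i,p'}$ (they exist by Silverman's Zsigmondy theorem, and typically divide exactly). I then want to show that for most $n\in(M,M+N]$ there is a prime $p'\mid n$ with $p'\| n$ in this range, contributing a prime $p$ with $v_p(D_{i,n})=1$.

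The catch is that $p$ may also divide other $D_{j,n_j}$, and then the exponents could balance. To handle this, I count instead the tuples for which every coordinate's "private" primes are shared. By the strong divisibility property, $p\mid D_{j,n_j}$ forces $r_j(p)\mid n_j$, a divisibility condition of density $1/r_j(p)$. Summing over the shared prime gives a small loss.

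So each coordinate independently needs $n_i$ to lack a prime factor $p'\in(z,N]$ with the good property. The proportion of $n$ in an interval with no prime factor in $(z,N^{1/2}]$ is $\ll \log z/\log N\asymp \log\log N/\log N$ by the fundamental lemma or upper-bound sieve, uniformly in $M$. Multiplying over the $s$ coordinates gives $N^s(\log\log N/\log N)^s$.

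For \eqref{eqn:M0} with $M=0$, we need $n_i$ to be free of such large primes, i.e.\ essentially $y$-smooth. Counting smooth numbers with $y=\exp((\tfrac12\log N\log\log N)^{1/2})$ gives the saving $\Psi(N,y)/N=\exp(-(1/\sqrt2+o(1))(\log N\log\log N)^{1/2})$. The optimization here balances the smoothness density against the loss from primes shared between coordinates and from $z$-rough cases.

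The factor $\lceil s/2\rceil$ instead of $s$ reflects that a shared prime can pair up two coordinates. So I would pair coordinates and guarantee at least one smooth coordinate per pair, giving $\lceil s/2\rceil$ independent smooth conditions.

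The main obstacle is controlling primes $p$ that divide two different $D_{j,n_j}$ and are therefore not private. For this I would bound the number of tuples with $r_i(p)\mid n_i$ and $r_j(p)\mid n_j$ for some large $p$ by $N^s\sum_p 1/(r_i(p)r_j(p))$. The key input is that $r_i(p)\gg p^{1/2-\eps}$ for almost all $p$ (a known result on orders of points mod $p$), which makes this sum converge. Exceptional Wieferich-type primes with $v_p(D_{i,r(p)})\ge2$ are handled similarly, since they are sparse.
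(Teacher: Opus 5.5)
The decisive gap is your claim that, for (almost) every prime $p'$ in the sieving range, $D_{i,p'}$ has a primitive prime divisor occurring to exactly the first power. Silverman's Zsigmondy theorem gives only the existence of a primitive divisor, not its exponent. No unconditional result rules out the primitive part of $d_{p'P}$ being powerful for many $p'$, and the sparsity of ``Wieferich-type'' primes is equally unproven. In the paper, such a prime is available only under the $abc$-conjecture (Lemma~\ref{lem:strongerzsigmondy}).

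Your own reduction shows that exponent $1$ is more than you need. It suffices to have a primitive prime $p_q$ of $d_{qP}$ with $\nu_{p_q}(d_{qP})\not\equiv 0 \pmod{\ell}$, since then $\nu_{p_q}(d_{nP})=\nu_{p_q}(d_{qP})$ whenever $q\mid n$ and $p_q\nmid n/q$. The paper obtains this for every prime $q>C_{E,P,\ell}$ from Lemma~\ref{lem:powerwithsunit} with $\cW=\cW_P$: only finitely many points have denominator an $\ell$th power up to a $\cW_P$-unit. That lemma is a genuine Diophantine input. It uses an Everest--Reynolds--Stevens factorisation to reduce to the curves $ux^{2\ell}+vy^{2\ell}=1$ with $\cW$-unit coefficients, and counts their points via Dimitrov--Gao--Habegger together with a conductor bound for the ranks of their Jacobians. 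Nothing in your proposal replaces it.

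Your handling of shared primes also fails as written. With $r_i(p),r_j(p)\gg p^{1/2-\eps}$ you get $\sum_p 1/(r_i(p)r_j(p))\gg\sum_p p^{-1+2\eps}=\infty$, so the sum does not converge. Moreover, an ``almost all $p$'' bound says nothing about the thin set of primes you actually use. The paper avoids this with a dichotomy on the tuples:
\begin{itemize}
\item Either every $d_{n_iP_i}$ is an $\ell$th power up to a $\cW(R)$-unit. This case is handled coordinatewise by Lemma~\ref{lem:sq+sqwithsunit}, with one designated $p_q$ per sieving prime $q\in(2R,N^{1/2}]$. The failures $qp_q\mid n$ cost $\sum_q N/(qp_q)\ll N/R$, since $p_q\gg q$ by Hasse. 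This case contributes $N^s(\log R/\log N)^s$.
\item Or some coordinate has a prime $p\notin\cW(R)$ with $\nu_p\not\equiv 0\pmod{\ell}$. That prime must reappear in another coordinate, where its index of appearance exceeds $R$, so this case costs $N^s/R$ by Lemma~\ref{lem:divbyp}.
\end{itemize}

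Your threshold is also off. With $z=N^{c/\log\log N}$ the sieve density is only $\log z/\log N\asymp 1/\log\log N$. To reach $\log\log N/\log N$ the threshold must be polylogarithmic, which is why the paper takes $R=(\log N)^s$.

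For \eqref{eqn:M0}, your choice $\log y\sim(\tfrac12\log N\log\log N)^{1/2}$ matches \eqref{eqn:newR}. However, the factor $\lceil s/2\rceil$ does not come from having a smooth coordinate in each pair. Coordinates satisfying \eqref{eqn:foreach} each save $R^{1+o(1)}$ by Lemma~\ref{lem:sq+sqwithsunitm0}. The $t$ coordinates failing it form a graph without isolated vertices, and together save $R^{\lceil t/2\rceil}$ by Lemmas~\ref{lem:GraphCover} and~\ref{lem:divbyp}. The worst case $t=s$ gives the exponent $\lceil s/2\rceil$.
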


Certainly, a more interesting question is whether one can obtain a power saving. Unfortunately, we do not currently have an affirmative answer. See Section~\ref{sec:abc} for some conditional perspectives, and the second paragraph of Section~\ref{sec:bottleneck} for further remarks, which provide a power saving over the integers in $(0,N]$ with at most a fixed number of prime factors, when counted with multiplicity.

Let us also arrange the rational points with respect to their canonical heights. Suppose $E(\Q)$ has rank $r$, then by~\cite[Theorem 4.5]{Kowalski}, we have
\begin{equation*}
H^{r/2} \ll  \sharp\, \{P \in E(\Q):~\h(P) \le H\} \ll H^{r/2}.
\end{equation*}

Using the same arguments as in the proof of Theorem~\ref{thm:prod}, we also have the following result.

\begin{thm}\label{thm:prodoverallpts}
Let $s\ge 2$ be a fixed integer and $\ell$ be a fixed prime and let $E_1, \ldots, E_s $ be elliptic curves over $\Q$ whose ranks over $\Q$ are $r_1,\ldots, r_s$, respectively. Assume that each $r_i>0$. Then uniformly over $H\ge 2$, we have 
$$\sharp\, \cD_{\ell}(H)\ll 
\frac{H^{(r_1+\cdots+r_s)/2}}{(\log H)^{\lceil s/2 \rceil/(1+2/r_{\min})}},$$
where we denote $r_{\min}=\min\{r_1,\ldots, r_s\}$.
\end{thm}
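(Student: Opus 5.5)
Fix $\mathbf{k}=(k_1,\ldots,k_s)\in[1,\ell-1]^s$; there are only $O_\ell(1)$ choices, so it suffices to bound, for each fixed $\mathbf{k}$, the number of tuples $(P_1,\ldots,P_s)$ with $\h(P_i)\le H$ and $d_{P_1}^{k_1}\cdots d_{P_s}^{k_s}\in\cN_\ell$. The approach mirrors the one announced for Theorem~\ref{thm:prod}. We pair up the coordinates: split $\{1,\ldots,s\}$ into $\lfloor s/2\rfloor$ disjoint pairs plus possibly one singleton, giving $\lceil s/2\rceil$ blocks. For each block we find a prime-based obstruction that is satisfied with probability roughly $1-(\log H)^{-c}$, independently across blocks. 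The product structure then yields the exponent $\lceil s/2\rceil$.

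\textbf{Key arithmetic input.} If a prime $p$ divides exactly one of the $d_{P_i}$ and does so to exponent $e$ with $\ell\nmid k_ie$, then the product is not an $\ell$th power. We therefore consider primes $p$ in a range $(z,2z]$. Let $\Gamma_i=E_i(\Q)$, and for a prime $p$ of good reduction let $\Gamma_i(p)$ be the kernel of reduction modulo $p$, a subgroup of index $\sharp \widetilde E_i(\F_p)\asymp p$. Then $p\mid d_P$ if and only if $P\in \Gamma_i(p)$. Moreover, the $p$-adic valuation is controlled: $v_p(d_P)=1$ exactly when $P\in \Gamma_i(p)\setminus \Gamma_i(p^2)$, for $p$ large (via the formal group). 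So the event ``$v_p(d_{P_i})=1$'' is defined by membership in a coset union of a subgroup of index $\asymp p$ (respectively, not in one of index $\asymp p^2$).

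\textbf{Counting lattice points.} Via the isomorphism $\Gamma_i\cong \Z^{r_i}\oplus T_i$, the points with $\h(P)\le H$ correspond to lattice points in an ellipsoid of radius $\sqrt H$. Counting them in a coset of a sublattice of index $m$ gives
\[
\frac{H^{r_i/2}}{m}+O\bigl(H^{(r_i-1)/2}\,m^{1/r_i}\bigr)
\]
(or a similar error term coming from the shortest vector of the sublattice). This is where $r_{\min}$ enters. For a block $\{i,j\}$, consider the indicator that some prime $p\in(z,2z]$ has $v_p(d_{P_i})=1$ and $p\nmid d_{P_j}$ (or the symmetric condition). If no such $p$ exists, the tuple is ``bad'' for the block. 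Since $\ell\nmid k_i$, any tuple in $\cD_\ell(H)$ must be bad for every block, because a prime dividing $d_{P_i}$ exactly once and no other $d_{P_t}$ gives an obstruction. To ensure $p\nmid d_{P_t}$ for $t$ outside the block as well, we take all conditions jointly for the full tuple. Alternatively, we count $p$ dividing exactly one coordinate overall and use that each block contributes such primes independently. Bounding the number of bad tuples is then a large-sieve/Turán-type second-moment argument: the expected number of good primes in $(z,2z]$ is $\asymp \sum_{p}1/p\asymp 1/\log z$ over a dyadic range, so we sum over $p\le z$ instead, which gives an expectation of about $\log\log z$. Using a Selberg or large sieve in the lattice (with moduli $p_1p_2$, index $\asymp p_1p_2$), the proportion of tuples with no good prime up to $z$ is $\ll (\log z)^{-1}$ per block.

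\textbf{Balancing the error term, the main obstacle.} The sieve needs moduli up to $Q\approx z^2$ with error $H^{(r-1)/2}Q^{1+1/r}$ summed over moduli. Requiring this to be at most $H^{r/2}(\log H)^{-1}$ forces $z$ to be a small power of $H$: we need $z^{2(1+2/r)}$-type losses to stay below $H^{1/2}$. Controlling the uniformity of the lattice-point count in sublattices of $\Z^{r}$ of index $\asymp p$ (a poorly shaped kernel-of-reduction sublattice could have short vectors) is the delicate part. I would use the fact that the kernel of reduction contains no nonzero point of height $\ll\log p$, which bounds the successive minima from below and hence bounds the error via Davenport's lemma. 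The resulting efficiency is $\log z\asymp (\log H)^{1/(1+2/r_{\min})}$, so each block saves $(\log H)^{-1/(1+2/r_{\min})}$. Multiplying over the $\lceil s/2\rceil$ blocks gives the claimed bound. The blocks are independent because the conditions involve disjoint coordinates, though the requirement $p\nmid d_{P_t}$ for other $t$ only costs a constant factor.
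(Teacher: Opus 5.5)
Your argument rests entirely on the sieve step, and that step relies on assumptions that are false.

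First, the kernel of reduction $\Gamma_i(p)$ does not have index $\sharp\widetilde E_i(\F_p)$. Its index is $\rho_{E_i,p}$, the size of the image of $E_i(\Q)$ in $\widetilde E_i(\F_p)$. More seriously, the conditions at distinct primes are not independent: $[\Gamma_i:\Gamma_i(p_1)\cap\Gamma_i(p_2)]$ can be much smaller than the product of the two indices. Already in rank one, with $P=nG$, one has $p\mid d_{nG}$ if and only if $\rho_p\mid n$. So the joint condition is $\mathrm{lcm}(\rho_{p_1},\rho_{p_2})\mid n$, and primes sharing an index of appearance are perfectly correlated.

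Your sieve over $p\le z$ is therefore really a sieve of $n$ by the set $\{\rho_p: p\le z\}$. Showing that the sifted proportion is $\ll(\log z)^{-1}$ would need Artin/Lang--Trotter-type information on which integers occur as $\rho_p$ for small $p$, and nothing in your sketch provides it. Unconditionally, the primes $p\le z$ you can guarantee to have a prescribed prime index $q$ are the primitive divisors of $d_{qG}$ with $q\ll(\log z)^{1/2}$, since $\log d_{qG}\asymp q^2$. Sieving by these only bounds the proportion by $O(1/\log\log z)$. Even then you would need their valuations to be $\not\equiv 0\pmod\ell$, which is itself a Diophantine statement.

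The paper's own elliptic sieve (Section~\ref{sec:wunitoverEDS}) avoids this problem differently. It sieves the index $n$ by primes $q$, using a prime $p_q$ of unrestricted size with $\rho_{p_q}=q$ supplied by Lemma~\ref{lem:powerwithsunit}, and it is only used when $Q_i=O_i$. The proof of this theorem uses no sieve at all.

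Second, the exponent is not derived. Your error analysis forces $z=H^{c}$, hence $\log z\asymp\log H$. The claim $\log z\asymp(\log H)^{1/(1+2/r_{\min})}$ does not follow from anything you wrote; were your sieve valid, it would give the stronger saving $(\log H)^{-1}$ per block.

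The missing idea is Diophantine, and your proposal has no Diophantine ingredient at all. The paper fixes a threshold $R$ and lets $\cT(R)$ consist of the bad primes together with the primes having $\rho_{E,p}\le R$. It then splits the tuples in two.

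\textbf{Tuples where some coordinate is nearly a power.} If some $d_{P_i}$ is an $\ell$th power times a $\cT(R)$-unit, Lemma~\ref{lem:powerwithsunit} counts the possibilities. The Everest--Reynolds--Stevens factorisation reduces the problem to the generalised Fermat curves $ux^{2\ell}+vy^{2\ell}=1$ with $S$-unit coefficients. These are bounded by Dimitrov--Gao--Habegger together with a rank bound for their Jacobians. This gives $\exp(O(L(\cT(R))))$ such points, and $L(\cT(R))\ll R^{1+2/r}$ by Lemma~\ref{lem:TRbound}.

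\textbf{Remaining tuples.} Here each $d_{P_i}$ has a prime $p\notin\cT(R)$ with $\nu_p(d_{P_i})\not\equiv 0\pmod\ell$, and this prime must also occur in another coordinate. Lemma~\ref{lem:GraphCover} combined with Lemma~\ref{lem:divbypoverallpointsbetter} then saves a factor $R^{\lceil s/2\rceil}$.

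Balancing $\exp(O(R^{1+2/r_{\min}}))$ against $R^{-\lceil s/2\rceil}$ by taking $R=c(\log H)^{1/(1+2/r_{\min})}$ is where both exponents come from. In particular, $\lceil s/2\rceil$ comes from the graph cover, not from a fixed pairing of the coordinates.
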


Furthermore, we obtain analogues of all these results for products of the $x$-coordinates (see Section~\ref{sec:CcapH}). In fact, denoting
\begin{align*}
X_{\ell,\Psf}(N)
=\sharp\, \left\{ (n_1, \ldots, n_s)\in (0,N]^s :\,
x(n_1P_1)^{k_1} \cdots x(n_sP_s)^{k_s} \in \cQ_{\ell}, \right.\\
\left. \qquad\qquad\qquad\qquad\qquad\qquad \text{for some } (k_1,\ldots,k_s)\in [1,\ell-1]^s \right\},
\end{align*}
we prove the following estimate.
\begin{thm}\label{thm:withx}
Let $s\ge 2$ be a fixed integer and $\ell$ be a fixed prime. Then we have
    \begin{equation*}
X_{\ell,\Psf}(N) \;\le\; \frac{N^s}{\exp\(\( \frac{1}{\sqrt{2}} \lceil \frac{s}{2} \rceil +o(1)\)(\log N \log \log N)^{1/2}\)}.
\end{equation*}
Moreover, assume that for each $i=1,\ldots,s$, we have 
\begin{equation}\label{eqn:mp}
x(m_iP_i)=0\quad \text{for some } m_i\in\mathbb N.
\end{equation}
Then the same upper bound also holds for $X_{2,P}(N)$.
\end{thm}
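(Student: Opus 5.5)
We reduce Theorem~\ref{thm:withx} to the bound \eqref{eqn:M0} of Theorem~\ref{thm:overEDS}, or rather to the sieve behind it. Write
\[
x(n_iP_i)=\frac{a_{n_iP_i}}{d_{n_iP_i}^2}, \qquad \gcd(a_{n_iP_i},d_{n_iP_i})=1 .
\]
The condition $\prod_i x(n_iP_i)^{k_i}\in\cQ_\ell$ then says two things. Up to sign and for $\ell\ne 2$, $\prod_i a_{n_iP_i}^{k_i}/\prod_i d_{n_iP_i}^{2k_i}$ is an $\ell$th power. For every prime $p$, the $p$-adic valuation of this ratio is divisible by $\ell$.

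For $\ell$ odd, $2k_i$ runs over a complete set of nonzero residues mod $\ell$ as $k_i$ does. So the exponents on the $d$'s are again admissible exponent vectors, and the only obstruction to applying the EDS result directly is the numerators $a_{n_iP_i}$. Here we use the standard $p$-adic fact that primes dividing $d_{nP}$ never divide $a_{nP}$. The sieve in the proof of \eqref{eqn:M0} only looks at primes $p$ dividing the denominators, namely primes of good reduction with $p\mid d_{n_iP_i}$ exactly when $m_p(P_i)\mid n_i$, where $m_p(P_i)$ is the order of $P_i$ mod $p$. For such $p$ one needs $\ell\mid \sum_i 2k_i\,v_p(d_{n_iP_i})$, and the numerators contribute nothing at $p$ by coprimality within each coordinate.

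The numerators could still interact across coordinates, since $p\mid d_{n_iP_i}$ and $p\mid a_{n_jP_j}$ can happen for $j\neq i$. To avoid this, we redo the sieve step as follows rather than quoting \eqref{eqn:M0} verbatim. We select the primitive-divisor primes $p$ used in that proof, those with $m_p(P_i)$ in a prescribed range and $p\,\|\,d_{n_iP_i}$. We then require in the sieve that $p$ divides exactly one of the $\ell$-power-relevant factors to an exponent not divisible by $\ell$.

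The event that such $p$ also divides $a_{n_jP_j}$ means $n_jP_j$ reduces to a point with $x\equiv0 \pmod p$. This is a union of at most two residue classes of $n_j$ modulo $m_p(P_j)$. It is therefore as rare as the event $p\mid d_{n_jP_j}$, and costs at most a factor $O(1)$ per prime in the sieve density. The exponential saving $\exp\bigl((\lceil s/2\rceil/\sqrt2+o(1))(\log N\log\log N)^{1/2}\bigr)$ is insensitive to such constant changes in local densities. Concretely, it comes from pairing coordinates, which gives the $\lceil s/2\rceil$, and from the smooth-number count of $n\le N$ with no large prime $p$ satisfying $m_p(P)\mid n$. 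So the same bound follows.

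For $\ell=2$ the factor $2k_i$ kills the denominators: $d^{2k}$ is always a square. So the condition becomes $\prod a_{n_iP_i}^{k_i}\in\pm\cQ_2$, which concerns numerators. This is where hypothesis \eqref{eqn:mp} enters. If $x(m_iP_i)=0$, put $R_i=m_iP_i$, a point with $x(R_i)=0$. For $p\mid d_{kP_i}$ we get $x((k+m_i)P_i)\equiv x(R_i)=0$ modulo suitable powers of $p$. So $p\mid a_{nP_i}$ precisely when $n\equiv m_i \pmod{m_p(P_i)}$, and with valuation controlled like $v_p(d_{(n-m_i)P_i})$. The numerators thus behave like a shifted elliptic divisibility sequence, namely $n\mapsto d_{(n-m_i)P_i}$ up to finitely many bad primes. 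We then rerun the \eqref{eqn:M0} sieve with residue classes shifted by $m_i$; the shift by a fixed $m_i$ does not affect the smooth-number count.

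The main obstacle is to justify this exact $p$-adic transfer: that $v_p(a_{nP_i})=v_p(d_{(n-m_i)P_i})$ for all but finitely many $p$, uniformly in $n$. We would prove it via the formal group. Near $R_i$, write $(n-m_i)P_i=:T$ with $T$ in the kernel of reduction. The addition law gives $x(R_i+T)-x(R_i)$ of $p$-adic size $|z(T)|_p$, where $z=-x/y$ is the formal-group parameter, and $|z(T)|_p=|d_T|_p$ for $p$ of good reduction not dividing $y(R_i)$. The latter is a finite exclusion because $y(R_i)\neq0$, as $R_i$ is non-torsion. The excluded primes are handled by the $o(1)$.
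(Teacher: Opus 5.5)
Your argument for odd $\ell$ agrees in substance with the paper. Coordinatewise, $x(nP)\in\mathcal{O}^{*}_{\Q,\cW(R)}\cdot\cQ_\ell$ forces $d_{nP}\in\mathcal{O}^{*}_{\Q,\cW(R)}\cdot\cN_\ell$, so Lemma~\ref{lem:sq+sqwithsunitm0} applies. The cross-coordinate numerator coincidences need no modification of the sieve: Lemma~\ref{lem:divbyp} already counts $\nu_p(x(nP))\neq 0$ of either sign, and it is applied along at most $s$ edges of the graph. So your ``$O(1)$ per prime in the sieve density'' heuristic is not the right justification. Also, ``$p\,\|\,d_{n_iP_i}$'' should read $\nu_p(d_{n_iP_i})\not\equiv 0 \pmod \ell$; exact divisibility by a primitive divisor is only known under $abc$.

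The genuine gap is in the case $\ell=2$. The transfer you single out as the main step, $\nu_p(a_{nP_i})=\nu_p(d_{(n-m_i)P_i})$ for all but finitely many $p$ uniformly in $n$, is false. Since $x(-R_i)=x(R_i)=0$, the prime $p$ also divides $a_{nP_i}$ whenever $nP_i\equiv -R_i \pmod p$, that is $n\equiv -m_i \pmod{\rho_p}$. For such $n$ one generally has $\nu_p(d_{(n-m_i)P_i})=0$. You note ``at most two residue classes'' in the odd case, but for $\ell=2$ you keep only $n\equiv m_i$. Running your formal-group computation at $-R_i$ with $T=(n+m_i)P_i$ gives the correct relation $\nu_p(a_{nP_i})=\nu_p(d_{(n-m_i)P_i})+\nu_p(d_{(n+m_i)P_i})$ outside a finite set of primes. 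This is exactly what the paper extracts from $|x(nP)|=|\psi_{n+m}(P)\psi_{n-m}(P)|/(\psi_n(P)^2\psi_m(P)^2)$ together with Ayad's theorem.

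So the numerator is a product of two shifted EDS terms, not one, and you need a step you have not supplied before any sieve applies. A common prime divisor of $d_{(n-m_i)P_i}$ and $d_{(n+m_i)P_i}$ has $\rho_p\mid 2m_i$, hence divides $d_{2m_iP_i}$. After adjoining these finitely many primes to $\cW(R)$ to get $\cW'(R)$, the two terms have disjoint supports. Then $a_{nP_i}$ being a square up to units forces $d_{(n+m_i)P_i}\in\mathcal{O}^{*}_{\Q,\cW'(R)}\cdot\cN_2$ by itself. Only then does Lemma~\ref{lem:sq+sqwithsunitm0}, applied to the shifted index $n+m_i$ (or symmetrically $n-m_i$), give the bound, as in the paper.
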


Note that \cite[Theorem 5.3]{BLMN} also imposes \eqref{eqn:mp}, but deriving an estimate for a significantly denser set. Thus, it is natural to ask the following question.
\begin{question}
    Does the same bound for $X_{2,P}(N)$ remain valid without the assumption \eqref{eqn:mp}?
\end{question}

Furthermore, the techniques used to prove these results also apply to sequences that are significantly denser than the set of $\ell$th powers, such as the integers or rationals representable as sums of two squares; see Section~\ref{sec:sumofsquares}.

\subsection{Further motivations}\label{sec:moremotivs}
Consider the toy model for estimating the number of pairs $(n_1,n_2)\in (M,M+N]^2$, for which $d_{n_1P}\cdot d_{n_2P}$ is a perfect square. Equivalently, this amounts to counting pairs $(n_1,n_2)\in (M,M+N]^2$ such that $u_{n_1}=u_{n_2}$, where $u_n$ denotes the squarefree part of $d_{nP}$, that is, the product of all distinct primes dividing $d_{nP}$ with an odd valuation. Following the treatment in \cite{BBHOS26}, it is useful to know a \textit{quadratic exponential} growth of $u_n$, that is, a Siegel-type property as in \cite[Lemma 3.1]{BBHOS26}. However, to the best of our knowledge, only the $abc$-conjecture implies such a growth of $u_n$; see the proof of \cite[Theorem 2]{Silv88} and some related discussions in Section~\ref{sec:abc}. In particular, the $abc$-conjecture implies that the number of pairs $(n_1,n_2)\in (M,M+N]^2$ with $u_{n_1}=u_{n_2}$ is of order $N$. Hence, by the results in Section~\ref{sec:results}, we obtain an unconditional nontrivial upper bound, which in turn also provides a lower bound for the number of distinct values $u_n$ with $n\in (M,M+N]$. See also Remark~\ref{rem:mostun} for some unconditional lower bounds on $u_n$.

The proper algebraic subgroups of $\mathbb{G}_m^s$ are defined by finitely many equations of the form $x_1^{k_1}\cdots x_s^{k_s} = 1$, where $k_1,\ldots,k_s \in \mathbb{Z}$ are not all zero. In \cite{BMZ99}, Bombieri, Masser, and Zannier have initiated the study of intersections of geometrically irreducible algebraic curves $\cC$ in $\mathbb{G}_m^s$ (for $s \ge 2$) defined over a number field $K$, with proper algebraic subgroups of $\mathbb{G}_m^s$. More precisely, if $\cC$ is not contained
in any translate of a proper algebraic subgroup of $\mathbb{G}_m^s$, then \cite{BMZ99} shows that the set of points in $\cC(\overline{\Q})$ with multiplicatively dependent coordinates, is a set of a bounded Weil height. Later, in \cite[Theorem 2.1]{OSSZ18}, the authors have proved finiteness of such points over the maximal abelian extension of $K$, when $\cC$ has positive genus. This shows finiteness of \textit{complex points} with multiplicatively dependent coordinates on elliptic curves with complex multiplication (CM). Later Barroero and Sha \cite{BarSha} have removed the CM condition.

Here we consider the product of elliptic curves $\mathbb{E}:=E_1 \times \cdots \times E_s$, and the map
\[
\Phi_X : \mathbb{E}^{\circ} \longrightarrow \mathbb{G}_m^{s}, \qquad
(P_1,\ldots,P_s)\mapsto \bigl(x(P_1),\ldots,x(P_s)\bigr),
\]
where $\mathbb{E}^{\circ}$ is the subset of $\mathbb{E}$ given by
\[
\mathbb{E}^{\circ} :=
\left\{(P_1,\ldots,P_s)\in \mathbb{E} :
P_i \neq O_i,\; x(P_i)\neq 0, \; \text{for } i=1,\ldots,s
\right\}.
\]

Recently, a different viewpoint has been considered in~\cite{BBHOS26}, where the authors study the intersections of $\Phi_{X}(\mathbb{E}^{\circ}(\Q))$ with proper algebraic subgroups of $\mathbb{G}_m^{s}$. In this paper, we instead consider the intersections of $\Phi_{X}(\mathbb{E}^{\circ}(\Q))\times \mathbb{G}_m$ with \textit{typical} (see Section~\ref{sec:admexponents}) proper algebraic subgroups of $\mathbb{G}_m^{s+1}$. Moreover, the arguments used to prove the results in Section~\ref{sec:results} are applied to study the analogous problems for the maps defined by non-constant rational functions $f_i \in \mathbb{Q}(E_i)$, $i=1,\ldots,s$, provided that at least one of the $f_i$ has either a zero or a pole at $O_i$. For further details, we refer the reader to Section~\ref{sec:CcapH}.

\subsection{Overview and the strategies of the proofs}\label{sec:strats}
In our setting, we do not always (e.g., in Theorem~\ref{thm:prod} and Theorem~\ref{thm:prodoverallpts}) restrict to the terms of an elliptic divisibility sequence. Therefore, the convenient $p$-adic properties from \cite[Lemma~2.1]{HLS} are no longer available. To address this, we borrow the general counting strategy from \cite{BHOS,BBHOS26}, suitably adapted to our context. The main distinction is that, here we need to count the denominators that are perfect $\ell$th powers up to a $\mathcal{W}$-unit, for some appropriately chosen finite set of primes $\mathcal{W}$. While in \cite{BBHOS26}, it sufficed to count (Siegel-type estimates) only the terms that are $\mathcal{W}$-units.

The counting is carried out in Lemma~\ref{lem:powerwithsunit}, using a factorization argument similar to the proof of Siegel’s lemma \cite[Theorem 4.3]{Silv-Book}, and closely following the argument in~\cite{ERS}. This problem is, in essence, concerned with estimating the number of rational points on a family of \textit{generalised Fermat curves} with some $\mathcal{W}$-unit coefficients. To estimate this, we use Lemma~\ref{lem:Gao21} of Dimitrov, Gao, and Habegger. Furthermore, Lemma~\ref{lem:powerwithsunit} allows us to remove the assumption \textit{Hypothesis~4.4} imposed in the recent preprint of Kym~\cite{Kym26}; see Section~\ref{sec:bottleneck} for more details.

Furthermore, in Section~\ref{sec:wunitoverEDS}, we also study these questions for products of terms of elliptic divisibility sequences via a version of the elliptic sieve due to Loughran, Myerson, Nakahara, and the present author \cite{BLMN}. This is essentially a sieving with respect to the reductions $E(\Q) \to E(\Z/p\Z)$, for which the order of the reduction of the base point $P$ modulo $p$ is a prime, in the sense of Kowalski~\cite[Section~4.4]{Kowalski} (it is also worth noting that a sieve for \textit{strong divisibility sequences} has recently appeared by Browning and Verzobio in~\cite{BV24}). 

In fact, as in \cite{BLMN}, we also need to sieve with respect to the reductions $E(\Q) \to E(\Z/p^{m\ell}\Z),~m \ge 1$, in analogy with the case of integers over intervals; see, for example, \cite[Section~4.8]{Tenenbaum}. This is also the same sieve set-up used in \cite{BLMN}, and we follow that in the proof of Lemma~\ref{lem:sq+sqwithsunit}. One of the key differences is that, since we consider powers up to a $\mathcal{W}$-unit, we need to restrict to primes $p$ exceeding a threshold (as in \eqref{eqn:threshold}) that depends on $\mathcal{W}$. 

However, our situation is considerably simpler than in \cite{BLMN}, where the primes $p$ are required to avoid certain arithmetic progressions, whereas no such restriction is present for us. This simplifies the argument, allowing an additional logarithmic saving. In fact, it turns out that over long intervals, the main contribution comes only from the smooth numbers (Lemma~\ref{lem:sq+sqwithsunitm0}), leading to a \textit{subpolynomial} saving in \eqref{eqn:M0}. In particular, both Lemma~\ref{lem:sq+sqwithsunit} and Lemma~\ref{lem:sq+sqwithsunitm0} (see Remark~\ref{rem:R>logN} for example) provide nontrivial estimates over some higher ranges of $\sharp\, \cW$, where Lemma~\ref{lem:powerwithsunit} holds trivially. As a consequence, we prove Theorem~\ref{thm:withx}, which gives an elliptic analogue of \cite[Theorem~2.2]{BKS21} for the $x$-coordinates.

We prove Theorem~\ref{thm:prodoverallpts} in Section~\ref{sec:prodoverallpts}. The key ingredient lies in Section~\ref{sec:maria}, where we estimate the number of points of bounded canonical height that reduce to $O$ modulo a large prime. This is obtained by counting lattice points in boxes over a family of sublattices of $\Z^r$ of large index. 

In Section~\ref{sec:applications}, we collect several applications of our methods. First we prove Theorem~\ref{thm:singlevar}, which serves as a version of \cite[Theorem~1]{HLS} for arbitrary shifts. Then in Section~\ref{sec:CcapH}, we discuss analogues of the results in Section~\ref{sec:results} for $x$-coordinates and more general rational functions $f \in \Q(E)$. Lastly in Section~\ref{sec:sumofsquares}, we present an application of the elliptic sieve in estimating the number of $s$-tuples of rational points, for which the product of their $y$-coordinates is a sum of two squares. Analogous to Lemma~\ref{lem:sq+sqwithsunit}, the main ingredient here is in \eqref{eqn:[]+[]uptoW}, which proves Theorem~\ref{thm:y=[]+[]} and recovers \cite[Theorem 1.1]{BLMN} for $s=1$.

Finally in Section~\ref{sec:bottleneck}, we address some challenges in obtaining power savings in our main results. Subsequently, in Section~\ref{sec:abc}, we outline approaches for obtaining power savings under variants of the $abc$-conjecture, indicating possible directions for further improvement.

\subsection{Notations}
We recall the standard convention that the notations $U \ll V$ and $U = O(V)$ are equivalent to $|U| \le cV$ for some constant $c>0$, where $c$ may depend on $s,\ell$, the $s$-tuples $\Psf$ and $\Qsf$ as in~\eqref{eq: P Q}, the curves $E_1,\ldots,E_s$ (i.e., the coefficients of their Weierstrass equations~\eqref{eq:Weier}), and, in some instances, their ranks. Furthermore we write $V=U^{o(1)}$ for any quantity $V$, which for any $\delta > 0$ satisfies $|V| \le c(\delta) U^{\delta}$, with some constant $c(\delta)$ that 
may depend on $\delta$ and all the invariants mentioned earlier. In particular, $U^{o(1)} \cdot U^{o(1)} = U^{o(1)}$. 

For any finite set $\mathcal{S}$, we denote by $\sharp\,  \mathcal{S}$ its cardinality. Given a prime $p$ and a nonzero rational number $x$, we denote by $\nu_p(x)$ the $p$-adic valuation of $x$. Finally, as usual, $\Q$, $\Z$, and $\N$ denote the sets of rational numbers, integers, and natural numbers, respectively.

\section{Perfect powers among denominators}
Let $K$ be a number field. We denote by $M_K$ the set of all places of $K$, both archimedean and non-archimedean, and let $M_{K,\infty}\subset M_K$ be the set of all archimedean places. 

Let $\mathcal{S} \subset M_K$ be a finite set of places containing $M_{K,\infty}$. The ring $\mathcal{O}_{K,\cS}$ of $\cS$-integers in $K$ is given by
$$\mathcal{O}_{K,\cS}=\{x \in K : \nu(x) \ge 0,~\mathrm{for~all}~\nu \in M_K\setminus \cS\},$$
and the unit group $\mathcal{O}^{*}_{K,\cS}$ of $\mathcal{O}_{K,\cS}$ is given by
$$\mathcal{O}^{*}_{K,\cS} = \{x \in K : \nu(x) = 0,~\mathrm{for~all}~\nu \in M_K\setminus \cS\}.$$

Let $C$ be a smooth, geometrically irreducible, projective curve of genus $g \ge 2$ defined over $\Q$, and $\J(C)$ be the Jacobian of $C$. We denote by $\rank\bigl(\J(C)(K)\bigr)$ the Mordell-Weil rank of $\J(C)$ over $K$. 

We shall use the following estimate due to Dimitrov, Gao and Habegger~\cite[Theorem~1.1]{DGH21}.

\begin{lem}\label{lem:Gao21}
There exists a constant $c(K,g) > 0$, such that
\[
\sharp\, C(K) \le c(K,g)^{1 + \rank\bigl(\J(C)(K)\bigr)}.
\]
\end{lem}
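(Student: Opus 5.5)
The statement is the uniform Mordell bound of Dimitrov, Gao and Habegger~\cite[Theorem~1.1]{DGH21}, and in this paper it is simply quoted. If I were to reprove it, I would follow their strategy: pass to the Mordell--Weil lattice of the Jacobian, then split the rational points into \emph{large} and \emph{small} ones according to the Néron--Tate height. Throughout, write $r=\rank(\J(C)(K))$. If $C(K)=\emptyset$ there is nothing to prove. Otherwise, fix $P_0\in C(K)$ and use the Abel--Jacobi embedding $C\hookrightarrow \J(C)$, $P\mapsto [P-P_0]$. This embeds $C(K)$ into $\J(C)(K)\otimes\R\cong\R^r$, equipped with the positive definite quadratic form $\h$, the Néron--Tate height. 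Let $h=\max\{1,h_{\mathrm{Fal}}(\J(C))\}$ be the stable Faltings height. Call a point large if $\h(P)\ge c_1 h$ and small otherwise, where $c_1$ depends only on $g$ and $[K:\Q]$.

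\textbf{Large points.} Here I would use the classical Mumford gap principle together with the Vojta inequality. Both come with constants depending only on $g$ and $[K:\Q]$ once the threshold is of size $c_1 h$. Together they show two things. First, any two large points whose images in $\R^r$ lie in a small cone have norms growing geometrically. Second, beyond a fixed ratio they cannot lie in such a cone at all. Covering the unit sphere of $\R^r$ by $\le c^{r}$ cones, and bounding the number of large points in each cone by a constant, gives at most $c^{1+r}$ large points.

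\textbf{Small points.} This is the heart of the argument and the main obstacle. The key input I would aim for is the new height inequality of Dimitrov--Gao--Habegger. Consider the Faltings--Zhang map
\[
C^{M+1}\to \J(C)^M,\qquad (P_0,\ldots,P_M)\mapsto (P_1-P_0,\ldots,P_M-P_0),
\]
in the universal family over a fine moduli space of curves of genus $g$ with level structure, with $M$ large in terms of $g$. On a Zariski open dense subset of the image one wants
\[
\h(x)\ge c^{-1}h - c .
\]
Proving this inequality requires two steps. The first is showing that the Betti map of this family is generically nondegenerate, which Gao obtains from a mixed Ax--Schanuel theorem. The second is converting nondegeneracy into the height bound via Habegger's method of equidistribution and intersection theory on the universal abelian scheme. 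The exceptional locus is then handled by Noetherian induction on its dimension, which again gives constants depending only on $g$.

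\textbf{Assembling the count.} Granting the inequality, the argument runs as follows. For each small point $P$, all but a bounded number $c$ of the other small points $Q$ satisfy $\h(P-Q)\ge c'^{-1}h$. So the small points lie in a ball of radius $\sqrt{c_1h}$ in $(\R^r,\h^{1/2})$. Moreover, they split into clusters of bounded size whose members are pairwise $\gg\sqrt h$ apart. A ball-packing argument in $\R^r$ then bounds the number of clusters by $c^{r}$, and hence the number of small points by $c^{1+r}$. Adding this to the bound for large points gives $\sharp\, C(K)\le c(K,g)^{1+r}$.
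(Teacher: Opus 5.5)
Your proposal matches the paper, which gives no proof of this lemma and simply quotes \cite[Theorem~1.1]{DGH21}, so identifying it as the Dimitrov--Gao--Habegger uniform Mordell bound is all that is needed. Your optional sketch of their argument is broadly accurate, but note that the inequality $\h(x)\ge c^{-1}h-c$ gives nothing when the Faltings height is bounded, so that regime needs a separate argument in \cite{DGH21} which your outline skips.
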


\subsection{A family of generalised Fermat curves}
Let $\mathcal{W}_K$ be a finite set of places of $K$ containing $M_{K,\infty}$ and at least one non-archimedean place, and let $\ell$ be as in Theorem~\ref{thm:prod}. Consider the following family of curves
\begin{equation}\label{eqn:Cuvell}
C_{u,v,\ell}: ux^{2\ell} + vy^{2\ell} = 1, 
\quad u,v \in \mathcal{O}^{*}_{K,\cW_K}/(\mathcal{O}^{*}_{K,\cW_K})^{2\ell}.
\end{equation}

We need to estimate the following sum
\begin{equation}\label{eqn:sumofsizes}
\sum_{u,v \in \mathcal{O}^{*}_{K,\cW_K}/(\mathcal{O}^{*}_{K,\cW_K})^{2\ell}}
\sharp\, C_{u,v,\ell}(K).
\end{equation}

Since $u,v \neq 0$, each such affine curve $C_{u,v,\ell}$ is smooth. Let $\widetilde{C}_{u,v,\ell}$ denote its projective model, given by $\widetilde{C}_{u,v,\ell}: ux^{2\ell} + vy^{2\ell} = z^{2\ell}$. This is a smooth, geometrically irreducible projective curve. It is well-known that its genus is $(2\ell-1)(\ell-1)$; see, e.g., \cite[Exercise 2.7]{Silv-Book}. Moreover, each of them is geometrically irreducible, since over $\overline{K}$ one can rescale $x$ and $y$ to reduce the defining equation to $x^{2\ell} + y^{2\ell}-z^{2\ell}=0$, which is irreducible over $\overline{K}$ (for instance, by Eisenstein’s criterion).

Since $C_{u,v,\ell}$ is an affine piece of $\widetilde{C}_{u,v,\ell}$, clearly we have $\sharp\, C_{u,v,\ell}(K) \le \sharp\, \widetilde{C}_{u,v,\ell}(K)$, and hence, we can apply Lemma~\ref{lem:Gao21} to $\widetilde{C}_{u,v,\ell}$.

To estimate \eqref{eqn:sumofsizes} we argue as in the proof of \cite[Proposition~10.1]{R10}, except that we use Lemma~\ref{lem:Gao21}, instead of \cite[Theorem 1.3]{R10} and the bound at the top of page 760 \cite{R10}.

\begin{lem}\label{lem:C'u}
We have 
  \[\sum_{u,v \in \mathcal{O}^{*}_{K,\cW_K}/(\mathcal{O}^{*}_{K,\cW_K})^{2\ell}}\sharp\,C_{u,v,\ell}(K)   \le \exp\Bigl(O\Bigl(L(\cW)\Bigr)\Bigr),
  \]
where the implied constant depends only on $\ell$ and $K$. Here, we denote
\begin{equation}\label{eqn:remond}
L(\cW)=\sum_{p\in\cW}\log p,
\end{equation}
and $\cW$ is the set of all rational primes lying below $\cW_K$.
\end{lem}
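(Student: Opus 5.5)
The plan is to bound the number of summands and the size of each summand separately, both by $\exp(O(L(\cW)))$.

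\emph{Counting the summands.} By Dirichlet's $\cS$-unit theorem, $\mathcal{O}^{*}_{K,\cW_K}$ is finitely generated of rank $\sharp\,\cW_K-1$. Its torsion is the finite group $\mu(K)$. Hence
\[
\sharp\,\mathcal{O}^{*}_{K,\cW_K}/(\mathcal{O}^{*}_{K,\cW_K})^{2\ell}\le \sharp\,\mu(K)\,(2\ell)^{\sharp\,\cW_K}.
\]
Since $\sharp\,\cW_K\le [K:\Q](\sharp\,\cW+1)$, the number of pairs $(u,v)$ is at most $\exp(O(\sharp\,\cW))$. This is $\exp(O(L(\cW)))$, because $\log p\ge \log 2$ for every $p\in\cW$.

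\emph{Bounding each summand.} For each pair, Lemma~\ref{lem:Gao21} applied to $\widetilde{C}_{u,v,\ell}$, whose genus $g=(2\ell-1)(\ell-1)\ge 2$ depends only on $\ell$, gives
\[
\sharp\, C_{u,v,\ell}(K)\le c(K,g)^{1+\rank \J(\widetilde{C}_{u,v,\ell})(K)}.
\]
It therefore suffices to show that
\[
\rank \J(\widetilde{C}_{u,v,\ell})(K)\ll \sharp\,\cW+1,
\]
with the implied constant depending only on $K$ and $\ell$.

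\emph{The rank bound.} This is the main obstacle. I would follow R\'emond's argument in \cite[Proposition~10.1]{R10}, using the standard bound via descent. For a principally polarized abelian variety $A/K$ of dimension $g$ and any integer $m\ge 2$, one has
\[
\rank A(K)\ll_{K,g,m} 1+\sharp\{\mathfrak p : A \text{ has bad reduction at } \mathfrak p\}.
\]
This comes from bounding the Selmer group $A(K)/mA(K)\hookrightarrow H^1(K, A[m];S)$, where $S$ consists of the bad places and the places above $m$. One passes to the field $K(A[m])$, whose degree is bounded in terms of $g$ and $m$, and then controls the $S$-class group and the $S$-units there. I would take $m=2$ and $A=\J(\widetilde{C}_{u,v,\ell})$.

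\emph{Bad reduction of the curve.} The curve $ux^{2\ell}+vy^{2\ell}=z^{2\ell}$ has good reduction at every prime $\mathfrak p$ not dividing $2\ell uv$. Indeed, after choosing representatives, $u$ and $v$ are $\mathfrak p$-units away from $\cW_K$, so the equation stays smooth modulo $\mathfrak p$. Thus $\J$ has good reduction outside $\cW_K$ and the primes above $2\ell$, and the number of bad places is at most $\sharp\,\cW_K+O_{K,\ell}(1)\ll \sharp\,\cW+1$.

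\emph{Combining.} Each term is at most $\exp(O(\sharp\,\cW+1))$. Multiplying by the number of pairs gives a total of $\exp(O(\sharp\,\cW+1))$, which is $\exp(O(L(\cW)))$ since $\cW$ is nonempty and every $p\in\cW$ satisfies $\log p\ge\log 2$.

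If a clean citation for the rank bound is unavailable, I would alternatively bound $\rank A(K)$ by the $2$-Selmer rank. I would then estimate this directly through the $\cS$-unit rank and the $2$-torsion of the $\cS$-class group of $K(A[2])$, both of which are $O(\sharp\,S)$, using Minkowski-type bounds with constants depending only on the bounded degree.
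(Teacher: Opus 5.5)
Your argument has a genuine gap in the rank bound. The estimate $\rank A(K)\ll_{K,g,m} 1+\sharp\{\mathfrak p : A \text{ has bad reduction at } \mathfrak p\}$ is not a known theorem, and your descent sketch does not prove it. After passing to $L=K(A[2])$, the $S$-unit part of the Selmer bound is indeed $O(\sharp\,S)$. The part coming from $\mathrm{Cl}_S(L)[2]$ is not. The field $L$ changes with $(u,v)$: its degree is bounded, but its discriminant is not. Minkowski-type bounds only give $\log h_L\ll \log|d_L|$, and $\log|d_L|\ll \sum_{p \text{ ramified}}\log p+O(1)$. That is a sum of logarithms of the ramified primes, not their number. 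Bounding the $2$-rank of class groups of fields of bounded degree by the number of ramified primes is open in general; genus theory gives it only in special cases such as quadratic fields. A further warning sign: if your step were valid, your final bound $\exp(O(\sharp\,\cW+1))$ would answer affirmatively the open question the paper poses in Section~\ref{sec:bottleneck}.

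The repair is to use the form of the Ooe--Top/R\'emond bound that is actually available, which is what the paper does: $\rank \J(\widetilde{C}_{u,v,\ell})(K)\ll \log\bigl|N_{K/\Q}(\cN^0_{\J(\widetilde{C}_{u,v,\ell})/K})\bigr|+1$, where $\cN^0$ is the radical of the conductor. Your bad-reduction analysis is correct and gives $r(u,v)\ll L(\cW)+O_{K,\ell}(1)$. The primes above $2\ell$ contribute only $O_{K,\ell}(1)$, and $L(\cW)\ge \log 2$, so $r(u,v)\ll L(\cW)$. Your explicit count of at most $\exp(O(\sharp\,\cW))\le \exp(O(L(\cW)))$ pairs $(u,v)$ is correct and is a step the paper leaves implicit. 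With these two pieces, each summand is $\exp(O(L(\cW)))$ and the lemma follows.
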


\begin{proof}
For any $u,v \in \mathcal{O}^{*}_{K,\cW_K}/(\mathcal{O}^{*}_{K,\cW_K})^{2\ell}$, denoting 
$$r(u,v)=\rank\bigl(\J(\widetilde{C}_{u,v,\ell})(K)\bigr),$$ Lemma~\ref{lem:Gao21} shows that
\[\sharp\, C_{u,v,\ell}(K)\le \sharp\, \widetilde{C}_{u,v,\ell}(K)
\le \exp\bigl(O(r(u,v)+1)\bigr).\]

On the other hand, the proof of \cite[Theorem 1]{OT89} or also \cite[Proposition 5.1]{R10}, shows that 
\[
r(u,v)\ll \log \Big|N_{K/\Q}\bigl(\cN^0_{\J(\widetilde{C}_{u,v,\ell})/K}\bigr)\Big|+1,
\]
where $\cN^{0}_{A/K}$ denotes the radical of the conductor of $A/K$, for any abelian variety $A$ over $K$. 

Note that for any curve $C$ over $K$, if $C$ has a good reduction at some prime $\mathfrak{p}$, then so does $\J(C)$ at $\mathfrak{p}$; see for instance \cite[Corollary 12.3]{Milne86}. In particular, $\widetilde{C}_{u,v,\ell}$ has a good reduction at any prime of $K$ except possibly those in $\mathcal{W}_K$ and those lying above $2$ or $\ell$. We derive
$$r(u,v)\ll L(\cW),~\mathrm{for~any}~u,v \in \mathcal{O}^{*}_{K,\cW_K}/(\mathcal{O}^{*}_{K,\cW_K})^{2\ell},$$
and the proof concludes.
\end{proof}

\subsection{Perfect powers up to a $\cW$-unit}
It is known, due to \cite{ERS}, that there are only $O(1)$ many $P\in E(\Q)$ such that $d_{P}$ is a perfect $\ell$th power. We need to estimate the following slightly modified quantity, which will be needed to prove Theorem~\ref{thm:prod}. 

For any prime $\ell$, and any set of rational primes $\cW$, we denote
\begin{equation}\label{eqn:dlw}
\cD_{\ell,\cW}=\{P\in E(\Q):d_{P}\in \mathcal{O}^{*}_{\Q,\cW} \cdot \cN_{\ell}\}.
\end{equation}

We now prove the following estimate. The proof of Lemma~\ref{lem:powerwithsunit} below, follows almost verbatim the argument in \cite[Theorem~1.1]{ERS}. For completeness, we include a brief self-contained proof.

\begin{lem}\label{lem:powerwithsunit}
    Let $\ell$ be any fixed prime, and $\cW$ be any finite non-empty set of rational primes. Then we have 
\begin{align*}
\sharp\,\cD_{\ell,\cW}\le \exp\Bigl(O\Bigl(L(\cW)\Bigr)\Bigr),
\end{align*}
where $L(\cW)$ be as in \eqref{eqn:remond}.
\end{lem}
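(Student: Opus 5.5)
We follow the factorization argument of \cite[Theorem~1.1]{ERS}, the same idea as in the proof of Siegel's theorem \cite[Theorem~4.3]{Silv-Book}. The aim is to reduce the count of $P\in\cD_{\ell,\cW}$ to counting $K$-rational points on the curves $C_{u,v,\ell}$ in \eqref{eqn:Cuvell}, and then to apply Lemma~\ref{lem:C'u}.

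Fix $P\in\cD_{\ell,\cW}$ with $P\ne O$, and write it as in \eqref{eqn:P}, so that $d_P=w z^\ell$ with $w\in\mathcal{O}^*_{\Q,\cW}$ and $z\in\N$. Let $K=\Q(e_1,e_2,e_3)$ be the splitting field of $x^3+ax+b$; it depends only on $E$. Then
\[
b_P^2=(a_P-e_1d_P^2)(a_P-e_2d_P^2)(a_P-e_3d_P^2).
\]
Enlarge $\cW$ to a set $\cW'$ by adding $2$, $\ell$, the primes dividing $\Delta=-16(4a^3+27b^2)$, and the primes ramified in $K$. This changes $L(\cW)$ only by $O(1)$. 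Let $\cW_K$ be the set of places of $K$ above $\cW'$ together with the archimedean places. As in the classical argument, the ideals generated by the factors $a_P-e_id_P^2$ are pairwise coprime away from $\cW_K$, because any common prime divides $(e_i-e_j)d_P^2$ and $\gcd(a_P,d_P)=1$. Hence each factor equals $\beta_i\gamma_i^2$ with $\gamma_i\in K$ and $\beta_i$ in a finite set of representatives. To keep the $S$-class group harmless, I would enlarge $\cW_K$ by a fixed finite set of primes generating the class group of $K$; this costs $O(1)$. We may then take $\beta_i\in\mathcal{O}^*_{K,\cW_K}/(\mathcal{O}^*_{K,\cW_K})^{2}$.

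Next I eliminate $a_P$. Subtracting two of the factorizations gives
\[
\beta_1\gamma_1^2-\beta_2\gamma_2^2=(e_2-e_1)d_P^2=(e_2-e_1)w^2z^{2\ell}.
\]
The three equations above do not yet have the shape $ux^{2\ell}+vy^{2\ell}=1$, since only $d_P$ is an $\ell$th power. The fix, following \cite{ERS}, is to work with the rescaled quantity $a_P/d_P^2$ against the $2\ell$-th power $d_P^2=w^2z^{2\ell}$, and to absorb all units modulo $(\mathcal{O}^*_{K,\cW_K})^{2\ell}$. The plan is to produce, from the triple $(\gamma_1,\gamma_2,\gamma_3)$, a point on a curve
\[
u X^{2\ell}+vY^{2\ell}=1,
\]
with $u,v\in\mathcal{O}^*_{K,\cW_K}/(\mathcal{O}^*_{K,\cW_K})^{2\ell}$. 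Concretely, I would combine the two differences of the three factors, using $e_1+e_2+e_3=0$, so that only $2\ell$-th powers up to units remain. If this cannot be done directly, I would instead pass to the finite extension $K(\sqrt{\beta_i})$ uniformly. The map $P\mapsto$ (choice of representatives, point on $C_{u,v,\ell}(K)$) must be at most $O(1)$-to-one, since $x(P)$ is recovered from $X$ or $Y$ up to finitely many choices. Summing over $(u,v)$ and applying Lemma~\ref{lem:C'u} then gives $\sharp\,\cD_{\ell,\cW}\le\exp(O(L(\cW')))=\exp(O(L(\cW)))$.

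The main obstacle is the middle step: extracting a genuine generalized Fermat equation of exponent $2\ell$ with $\cW_K$-unit coefficients from the cubic factorization. The difficulty is that $a_P-e_id_P^2$ is only a square up to units, while $d_P$ is only an $\ell$th power up to units. I expect to follow \cite{ERS} closely here, working in $K$, or if needed in a fixed extension of $K$ of bounded degree. The degree must stay bounded independently of $\cW$, so that the constant in Lemma~\ref{lem:C'u} is uniform. I would also check that the number of unit classes does not itself exceed $\exp(O(L(\cW)))$. This holds because $\sharp\,\mathcal{O}^*_{K,\cW_K}/(\mathcal{O}^*_{K,\cW_K})^{2\ell}\le(2\ell)^{O(\sharp\cW)}$ and $\sharp\cW\ll L(\cW)$.
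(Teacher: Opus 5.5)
Your outline has the right architecture: factor over the splitting field $K$, reduce to the curves $C_{u,v,\ell}$, and sum using Lemma~\ref{lem:C'u}. However, there is a genuine gap. The step you flag as ``the main obstacle'' is the whole content of the proof, and you do not supply it.

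The device you suggest for that step does not work. Combining the differences $\beta_i\gamma_i^2-\beta_j\gamma_j^2$ via $e_1+e_2+e_3=0$ cannot produce exponent $2\ell$: these differences are quadratic in the $\gamma_i$, and $e_1+e_2+e_3=0$ plays no role. The missing idea is to work in a field $L$ in which each factor is an honest square, $a_P-e_id_P^2=\xi_i^2$, and then factor the \emph{difference of squares}
\[
(e_j-e_i)\,d_P^2=\xi_i^2-\xi_j^2=(\xi_i-\xi_j)(\xi_i+\xi_j).
\]
Away from a fixed finite set of places, the two factors are coprime. A common prime divides $2\xi_i$ and $d_P$, hence $a_P$, contradicting $\gcd(a_P,d_P)=1$. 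Since $d_P^2=w^2z^{2\ell}$ with $w$ a $\cW$-unit, and $\mathcal{O}_{L,\cT_{\cW}}$ is a PID, this forces $\xi_i\pm\xi_j\in\mathcal{O}^*_{L,\cT_{\cW}}\cdot(\mathcal{O}_{L,\cT_{\cW}})^{2\ell}$. Dividing Siegel's identity $(\xi_1-\xi_2)+(\xi_2-\xi_3)=\xi_1-\xi_3$ by $\xi_1-\xi_3$ then gives $uX^{2\ell}+vY^{2\ell}=1$, with $u,v$ units modulo $2\ell$th powers. This is where the exponent $2\ell$ comes from.

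Your first factorisation also has a uniformity problem. You take $\beta_i\in\mathcal{O}^*_{K,\cW_K}/(\mathcal{O}^*_{K,\cW_K})^{2}$, so the $\beta_i$ depend on $\cW$. Adjoining all their square roots gives a field of degree exponential in $\sharp\,\cW$, which destroys the uniformity of the constant in Lemma~\ref{lem:Gao21}. Adjoining only the three relevant ones instead gives a $\cW$-dependent family of fields, and Lemma~\ref{lem:C'u} is stated for one fixed field.

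The fix, as in the paper, is to observe that the three factors $a_P-e_id_P^2$ are pairwise coprime away from a \emph{fixed} set $\cS$, independent of $\cW$. Here $\cS$ consists of the places above $2$ and the $e_i-e_j$, plus class group generators. Indeed, a common prime divides $(e_i-e_j)d_P^2$, hence $d_P$, hence $a_P$. So $\beta_i$ ranges over the fixed finite group $\mathcal{O}^*_{K,\cS}/(\mathcal{O}^*_{K,\cS})^{2}$. Adjoining the square roots of $\mathcal{O}^*_{K,\cS}$ then gives a single field $L$ of bounded degree. The set $\cW$ enters only in the second, difference-of-squares factorisation, through $\cT_{\cW}$. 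Lemma~\ref{lem:C'u} applied over this fixed $L$ then yields the bound $\exp(O(L(\cW)))$, with $L(\cW)$ as in \eqref{eqn:remond}.
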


\begin{proof}
 Let $\alpha_1,\alpha_2,\alpha_3$ be the (distinct) roots of $x^3+ax+b=0$. So for any $P\in E(\Q)$, we have
 \begin{equation}\label{eqn:maineqn}
 b^2_P=(a_P-\alpha_1 d^2_P)(a_P-\alpha_2 d^2_P)(a_P-\alpha_3 d^2_P).
 \end{equation}
 
To facilitate the factorisation argument, let $K = \Q(\alpha_1,\alpha_2,\alpha_3)$, and choose a finite set of places $\cS \subset M_K$ containing $M_{K,\infty}$ such that $\cO_{K,\cS}$ is a PID, and $2,\alpha_i-\alpha_j \in \cO_{K,\cS}^\ast$ for any $i \ne j$. 

Then, we denote $L$ be the finite extension of $K$, generated by the square-roots of the $\cO_{K,\cS}^{*}$. Let $\cT \subset M_{L}$ be any finite set of places lying over the elements of $S$ such that $\cO_{L,\cT}$ is a PID. 

Now factorising \eqref{eqn:maineqn}, we can write 
\begin{equation}\label{eqn:abetaz}
a_P-\alpha_i d_P^2 = (\beta_iz_i)^2,~\mathrm{for}~i=1,2,3,
\end{equation}
for some $\beta_i,z_i\in \mathcal{O}_{L,\cT}$. In particular, we get the following equations
\begin{equation}\label{eqn:differences}
    (\alpha_i-\alpha_j)d^2_P=(\beta_iz_i-\beta_jz_j)(\beta_iz_i+\beta_jz_j),\quad i\ne j \in \{1,2,3\},
\end{equation}
with $\alpha_i-\alpha_j\in \mathcal{O}^{*}_{L,\cT}$ and $\beta_i,z_i\in \mathcal{O}_{L,\cT}$, for each $i=1,2,3$.

Now, we need to make a modification to the argument in \cite[Theorem 1.1]{ERS}, as our $d_P$ is a perfect $\ell$th power, but up to some $\cW$-unit. For this,  we consider the following set of primes
$$\cT_{\cW} = \cT \cup \{ \mathfrak{p} \in M_{L} : \mathfrak{p} \mid p,\; p \in \cW \}.$$

As we have $(a_P,d_P)=1$, it follows from \eqref{eqn:abetaz} that the factors on the right side of \eqref{eqn:differences} are coprime in $\mathcal{O}_{L,\cT_{\cW}}$. In particular, we can write
$$ \beta_iz_i \pm \beta_jz_j \in \mathcal{O}^{*}_{L,\cT_{\cW}} \cdot \(\mathcal{O}_{L,\cT_{\cW}}\)^{2\ell},\quad \mathrm{for~any}~ i, j \in \{1,2,3\}.$$
Then using Siegel's identity 
\[
\frac{\beta_1 z_1 \pm \beta_2 z_2}{\beta_1 z_1 - \beta_3 z_3} 
\;\; \mp \;\;
\frac{\beta_2 z_2 \pm \beta_3 z_3}{\beta_1 z_1 - \beta_3 z_3} 
= 1,
\]
from \eqref{eqn:differences} we get a system of equations of the form
\begin{equation}\label{eqn:allequations}
ux^{2\ell} + vy^{2\ell} = 1,\quad u,v\in \mathcal{O}^{*}_{L,\cT_{\cW}}/(\mathcal{O}^{*}_{L,\cT_{\cW}})^{2\ell},\quad x,y \in L.
 \end{equation}

Following the concluding argument in the proof of~\cite[Theorem~1, p.~767]{ERS}, note that each such solution $(x,y)$ in \eqref{eqn:allequations} contributes only $O(1)$ rational points in $\cD_{\ell,\cW}$. The proof concludes, applying Lemma~\ref{lem:C'u}.
\end{proof}

\begin{rem}Alternatively, one can also write
    $$\sharp\,\cD_{\ell,\cW}\le \sum_{u\in \mathcal{O}^{*}_{\Q,\cW}/\(\mathcal{O}^{*}_{\Q,\cW}\)^{\ell}}\sharp\,\cD_{\ell}(u),$$
    where we denote
$$\cD_{\ell}(u) = \{ P \in E_u(\Q) : d_P \in \cN_{\ell}\}\quad \mathrm{and}\quad E_u: y^2 = x^3 + a u^4 x + b u^6.
$$

However, as is evident from the proof of Lemma~\ref{lem:powerwithsunit} or \cite[Theorem~1.1]{ERS}, the resulting bound depends explicitly on the coefficients of the Weierstrass equation \eqref{eq:Weier}. In particular, with the family of elliptic curves $E_u$ where the coefficients vary with $u$, one is essentially led back to the same family of generalised Fermat curves as in \eqref{eqn:allequations}.
\end{rem}

\section{Products over shifted points}

\subsection{Preliminary}
Let us first recall the following ingredients from \cite{BBHOS26}, which will be used throughout.

\begin{definition}[Index of appearance]
For a prime $p$, we denote by $\rho_p$ the index of appearance of $p$ as a divisor in the sequence $d_{nP}$, $n=1, 2, \ldots$, that is, the smallest $r$ such that $d_{rP} \equiv 0 \pmod p$; we set $\rho_p = \infty$ if no such $r$ exists, with the natural rules of operating with this quantity (as, for example, $\infty^{-1} = 0$). 
\end{definition}

We now estimate the number of terms with a large prime factor. The following lemma is taken from \cite[Lemma 3.3]{BBHOS26}.

\begin{lem}\label{lem:divbyp} 
    For any prime $p$, we have
\[\sharp\,\{M<n\leq M+N :~\nu_p\(x(nP+Q)\) \ne 0 \}\ll \frac{N}{\rho_p}+ 1.
\]
\end{lem}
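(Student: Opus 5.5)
The strategy is to use the group structure of the reduction of $E$ modulo $p$. If $p$ divides the denominator of $x(nP+Q)$, then $nP+Q$ reduces to the point at infinity modulo $p$, that is, $nP+Q\in E_1(\Q_p)$, the kernel of reduction. The same holds, with the roles swapped, if $p$ divides the numerator of $x(nP+Q)$. In that case $nP+Q$ reduces to one of the at most three points $(0,\pm\sqrt b)$ in $\widetilde E(\F_p)$, or to the point $(0,0)$ when $b\equiv 0$. We also need to handle primes of bad reduction, and primes where $P$ reduces into the singular locus. These are $O(1)$ primes, and for them I would simply accept that $\rho_p$ may be infinite; then $N/\rho_p+1$ is replaced by the trivial bound $N$.

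That last point needs a word. The bound must then hold trivially, so I would check that the implied constant may depend on $E,P,Q$. For the finitely many bad primes, the count is at most $N\ll N\cdot O(1)$, but the required bound is $N/\rho_p+1$, which may be small. The remedy is the following. For bad primes, one works instead in $E_0(\Q_p)$: some fixed multiple $cP$, with $c$ bounded in terms of the Tamagawa numbers, lies in $E_0(\Q_p)$. One then splits $n$ into residue classes modulo $c$ and runs the same argument, again using the filtration $E_1\subset E_0$. Since $\rho_p$ is defined through $d_{rP}\equiv 0$, a prime of bad reduction still has $rP\in E_1(\Q_p)$ for $r=\rho_p$, provided $\rho_p<\infty$.

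For the main step, fix a good prime $p$ with $\rho_p<\infty$. The key fact is that $\rho_p$ equals the order of $\widetilde P$ in $\widetilde E(\F_p)$, or more generally the order of $P$ in $E(\Q_p)/E_1(\Q_p)$. Indeed, $d_{rP}\equiv0 \pmod p$ holds exactly when $rP\in E_1(\Q_p)$, and $E_1$ is a subgroup. Now consider the condition that $nP+Q$ lies in a fixed coset, namely $E_1(\Q_p)$ or the preimage of one of the at most three points with $x\equiv 0$. The set of $n$ with $nP+Q\in R+E_1(\Q_p)$ is either empty or a single residue class modulo $\rho_p$. This is because $nP+Q\equiv mP+Q$ in the quotient if and only if $(n-m)P\in E_1$, that is, if and only if $\rho_p\mid n-m$. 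So for valuation $\nu_p(x(nP+Q))\ne0$ we obtain at most four residue classes modulo $\rho_p$. Each class has at most $N/\rho_p+1$ elements in $(M,M+N]$, which gives a total of at most $4(N/\rho_p+1)$.

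If $\rho_p=\infty$, then $P$ has infinite order in the finite group $E_0(\Q_p)/E_1(\Q_p)$. That is impossible for good $p$, and for bad $p$ it is impossible after passing to $cP$ as above. Hence the convention $\infty^{-1}=0$ only concerns degenerate cases. In those cases each coset is hit at most once, which gives the bound $O(1)$.

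The main obstacle is the bookkeeping at bad primes and at $p\mid b$ (the numerator case). There I would use the residue-class trick with the bounded index $[E(\Q_p):E_0(\Q_p)]$. Uniformity in $p$ is not affected, since only finitely many primes are bad.
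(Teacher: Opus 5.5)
Your argument is correct and follows the same route the paper relies on. The paper gives no proof here: it cites \cite[Lemma~3.3]{BBHOS26}, and uses the same coset reasoning in the proof of Lemma~\ref{lem:divbypoverallpointsbetter}. There, $\nu_p(x(nP+Q))<0$ places $nP+Q$ in the kernel of reduction, $\nu_p(x(nP+Q))>0$ places it (for good $p$) in at most two further cosets lying over $(0,\pm\sqrt{b})$, and each coset is hit by at most one residue class of $n$ modulo $\rho_p$.

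Your bad-prime bookkeeping can be simplified. The set $\{T\in E(\Q_p):\nu_p(x(T))<0\}\cup\{O\}$ is a finite-index subgroup of $E(\Q_p)$ for every $p$, even for a non-minimal model. Hence $\rho_p<\infty$ always for non-torsion $P$, so your provisional fallback to the trivial bound $N$ is never needed. Each of the finitely many bad primes then contributes at most $[E(\Q_p):E_1(\Q_p)]=O(1)$ residue classes, without the detour through $cP$. The resulting constant is independent of $Q$, which is what the uniformity over $\Qsf$ in Theorem~\ref{thm:prod} requires.
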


For any positive real number $R$, let $ \cW(R)$ be
the set of all primes $p$ with $\rho_{i,p} \le R$ for at least one $i =1, \ldots, s$. Here $\rho_{i,p}$ denotes the index of appearance of $p$ with respect to $P_i$, for each $i=1,\ldots, s$, where $P_1,\ldots,P_s$ are the rational points as in Theorem~\ref{thm:prod}.

\begin{lem}\label{lem:LandW}
    For any $R>1$, we have
    $$L(\cW(R))\ll R^3 \quad \mathrm{and}\quad \sharp\, \cW(R) \ll R^3/\log R.$$
\end{lem}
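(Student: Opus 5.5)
The plan is to bound, for each fixed $i$, the set of primes $p$ with $\rho_{i,p}\le R$, and then sum over $i=1,\ldots,s$; since $s$ is fixed, this costs only a constant factor. By definition of the index of appearance, if $\rho_{i,p}=r\le R$ then $p\mid d_{rP_i}$. So every prime in $\cW(R)$ divides
\[
\prod_{i=1}^s\prod_{r\le R} d_{rP_i}.
\]
Hence
\[
L(\cW(R))\le \sum_{i=1}^s\sum_{r\le R}\log d_{rP_i}.
\]

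The key input is the growth of the denominators. We have $\log d_{rP}\le \tfrac12 h(x(rP))$, since $x(rP)=a_{rP}/d_{rP}^2$ is in lowest terms and the naive height of $x(rP)$ dominates $\log d_{rP}^2$. The naive height differs from the canonical height by a bounded amount depending only on $E_i$, so
\[
\log d_{rP_i}\le \h(rP_i)+O(1)=r^2\,\h(P_i)+O(1).
\]
Summing over $r\le R$ gives
\[
\sum_{r\le R}\log d_{rP_i}\ll \sum_{r\le R}r^2\ll R^3,
\]
and therefore $L(\cW(R))\ll R^3$. This step is the only place where some input is needed, namely the quadratic growth of the canonical height together with the comparison between naive and canonical heights. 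Both are standard (see \cite[Section~VIII.9]{Silv-Book}), so I do not expect a real obstacle.

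For the cardinality bound, I would split $\cW(R)$ according to the size of its primes. The primes $p\le R^3$ number at most $\pi(R^3)\ll R^3/\log R$. Each prime $p>R^3$ contributes at least $\log p>3\log R$ to $L(\cW(R))$, so there are at most
\[
\frac{L(\cW(R))}{3\log R}\ll \frac{R^3}{\log R}
\]
such primes. Adding the two counts gives $\sharp\,\cW(R)\ll R^3/\log R$, with implied constants depending only on the $E_i$ and $P_i$.
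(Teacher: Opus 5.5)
Your proof is correct, and for $L(\cW(R))$ it follows the paper's route: every $p\in\cW(R)$ divides some $d_{rP_i}$ with $r\le R$, which gives $L(\cW(R))\le\sum_{i}\sum_{r\le R}\log d_{rP_i}$. The paper then imports the growth bound from \cite[Lemma~3.1]{BBHOS26}. You instead derive $\log d_{rP_i}\ll r^2$ directly from $2\log d_{rP}\le h(x(rP))$, the bounded difference between the naive and canonical heights, and $\h(rP)=r^2\h(P)$. This is legitimate, and only this elementary upper bound is needed, not any Siegel-type lower bound.

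For $\sharp\,\cW(R)$ your route genuinely differs. The paper simply cites \cite[Lemma~3.4]{BBHOS26}. You deduce the count from the $L$-bound by splitting at $R^3$: Chebyshev handles the primes $p\le R^3$, and $\log p>3\log R$ handles the rest. This makes the lemma self-contained. It also shows the cardinality bound is a formal consequence of the logarithmic-sum bound, at no loss in the exponent.
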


\begin{proof}
Note that if $\rho_{i,p}\le R$ for some $1\le i\le s$, then $p\mid d_{rP_i}$ for some integer $r$ with $1\le r\le R$. In particular, we can write
    $$L(\cW(R))\le \sum_{\substack{1\le r\le R\\ 1\le i\le s}} \log d_{rP_i}.$$
    
Hence, the estimation of $L(\cW(R))$ follows from \cite[Lemma 3.1]{BBHOS26}. On the other hand, the estimation of $\sharp\, \cW(R)$ is already proved in \cite[Lemma 3.4]{BBHOS26}.
\end{proof}

We shall also appeal to the following lemma from~\cite[Lemma~2.7]{BHOS}, which provides a saving that grows with~$s$.

\begin{lem}\label{lem:GraphCover}
Assume we are given a graph with the vertex set $\cV$ of cardinality  $t=\sharp\,  \cV$ and having no isolated vertex. Then there exists $ \cV_1\subseteq  \cV$ with $\sharp\,  \cV_1\leq t/2$ such that for any $v_2\in  \cV_2= \cV\setminus  \cV_1$ there exists a vertex $v_1\in  \cV_1$ which is a neighbour of $v_2$.
\end{lem}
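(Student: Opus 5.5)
The plan is to prove Lemma~\ref{lem:GraphCover} by picking a minimal dominating-type structure via a spanning forest. Since the graph has no isolated vertex, every connected component has at least two vertices. Working one component at a time, I will build a set that has at most half the vertices of that component and such that every vertex outside it has a neighbour inside it. Taking the union of these sets over all components then gives $\cV_1$ with $\sharp\,\cV_1\le t/2$, as required.

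Within a component $G'$ on $t'\ge 2$ vertices, I will fix a spanning tree $T$ and a root $r$. I then $2$-colour $T$ by the parity of the distance to $r$, which partitions the vertex set of $G'$ into two classes $A$ and $B$, both nonempty since $t'\ge2$. In a tree, every edge joins vertices of opposite parity. Every vertex $v\neq r$ is adjacent in $T$ to its parent, which lies in the other class. The root $r$ has at least one child, because $t'\ge 2$, and that child lies in the other class as well.

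Hence every vertex of $A$ has a neighbour in $B$, and every vertex of $B$ has a neighbour in $A$. I will let $\cV_1\cap G'$ be the smaller of $A$ and $B$, which has size at most $t'/2$. Every vertex in the complementary class then has a neighbour in $\cV_1$ through a tree edge, and tree edges are edges of the original graph.

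Summing over components gives the claim. There is no real obstacle here. The only point that needs care is the root, and it is handled by the assumption that there are no isolated vertices, which forces every component to have size at least $2$. An equivalent alternative is to take a maximal matching-free argument or a minimal dominating set and invoke Ore's theorem, but the tree bipartition is the most self-contained route.
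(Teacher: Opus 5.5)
Your argument is correct and complete. In each component, which has at least two vertices, the two parity classes of a rooted spanning tree dominate each other. The smaller class therefore has at most half the vertices of that component, and every vertex outside it has a tree neighbour, hence a neighbour in the graph, inside it. Summing over components gives $\sharp\,\cV_1\le t/2$.

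The paper does not prove this lemma at all; it imports it verbatim from \cite[Lemma~2.7]{BHOS}, so there is no in-paper argument to compare against. Your spanning-forest bipartition is the standard proof of Ore's bound: a graph on $t$ vertices without isolated vertices has a dominating set of size at most $t/2$. The citation buys brevity, while your argument makes the exposition self-contained and gives an explicit construction of $\cV_1$.

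Two small remarks. First, the step ``no isolated vertex $\Rightarrow$ every component has at least two vertices'' uses that the graph is loopless. This holds in the paper's applications, where edges join $n_i$ and $n_j$ only for $i\ne j$. Second, the phrase ``maximal matching-free argument'' in your last sentence is not meaningful and should be dropped. The Ore route you mention also needs the extra fact that the complement of a minimal dominating set is again dominating, which your tree argument sidesteps.
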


\subsection{Proof of Theorem~\ref{thm:prod}}
First, by Siegel's theorem, except for at most $O(N^{s-1})$ many tuples we may assume that $d_{n_iP_i+Q_i}>1$, for each $i=1,\ldots, s$.

Now, let $1<R< N$ be a real number to be chosen suitably later. We first count the $s$-tuples of \textit{first type}, that is, the $s$-tuples $(n_1, \ldots, n_s)\in \cD_{\ell,\Psf,\Qsf}(M,N)$, for which 
$$\nu_p(d_{n_iP_i+Q_i})\not\equiv 0 \pmod \ell \implies p \in \cW(R),~\mathrm{for~some }~i=1,\ldots, s.$$

Lemma~\ref{lem:powerwithsunit} and Lemma~\ref{lem:LandW} show that, the number of such $s$-tuples of the first type is bounded by $$ N^{s-1} \exp\left(O\(R^3\)\right).$$

For each remaining $s$-tuple $(n_1,\ldots, n_s)\in \cD_{\ell,\Psf,\Qsf}(M,N)$ of \textit{second type}, and for any $i=1,\ldots, s$, there exists a prime $p\not\in \cW(R)$ with $$\nu_p(d_{n_iP_i+Q_i})\not\equiv 0 \pmod \ell.$$

In particular, for any such tuple $(n_1,\ldots, n_s)$ of second type, and for any $i=1,\ldots,s$, there exists some $1 \le j\le s$ with $j\ne i$, satisfying
\begin{equation}\label{eqn:edge}
\nu_p(d_{n_iP_i+Q_i})\not\equiv 0 \pmod \ell \quad \mathrm{and}\quad \nu_p(d_{n_jP_j+Q_j})\not\equiv 0 \pmod \ell,
\end{equation}
for some prime $p\not\in \cW(R)$. 

We claim that the number of such $s$-tuples of second type is 
\begin{equation}\label{eqn:largeones}
O\(N^{s}R^{-\rf{s/2}}\).
\end{equation}

To prove this claim, we consider the graph $\cG$ on $s$ vertices  $n_1, \ldots, n_s$, and the edges are defined by the condition at \eqref{eqn:edge}. By the observation above, $\cG$ has no isolated vertex. By Lemma~\ref{lem:GraphCover}, we can fix a
subset $\cI$ of $\{1,\ldots, s\}$ with $m=\sharp\,  \cI\leq \fl{s/2}$, such that for any $j\in \{1,\ldots,s\}\setminus \cI$, there exists some $i\in \cI$, such that $n_i$ and $n_j$ satisfy \eqref{eqn:edge}.

There are trivially $O(N^m)$ choices for such $m$-tuples. Without loss of generality, we may assume that the fixed ones are $(n_1,\ldots, n_m)$. For any such fixed $m$-tuple, applying Lemma~\ref{lem:divbyp}, the number of remaining $(s-m)$-tuples $(n_{m+1},\ldots, n_s)\in (M,M+N]^{s-m}$ (such that $(n_1,\ldots,n_{m},n_{m+1},\ldots,n_s)$ is of second type) is
$$O\((N/R+1)^{s-m}\)= O\(N^{s-m}R^{-\rf{s/2}}\),$$ 
since $R< N$ and $m\le \lceil s/2 \rceil$. This proves the claim at \eqref{eqn:largeones}. For a more detailed argument, we also refer the reader to the estimations of $K_2$ and $K_3$ in the proof of \cite[Theorem 2.1]{BBHOS26}. 

Therefore, we finally derive
$$\sharp\, \cD_{\ell,\Psf,\Qsf}(M,N)\le N^{s-1} \exp\left(O\(R^3\)\right)+O\(N^{s}R^{-\rf{s/2}}\).$$

The proof concludes by taking
$R=c(\log N)^{1/3}$, for some sufficiently small constant $c>0$.
\qed

\section{Products over terms of elliptic divisibility sequences}

Let $\cW(R)$ be the same set of primes as in the proof of Theorem~\ref{thm:prod}. For any fixed $\ell$, we need to estimate the size of the following set 
$$\cD_{\ell,P,\cW(R)}(M,N)=\{M<n\le M+N: d_{nP}\in \mathcal{O}^{*}_{\Q,\cW(R)}\cdot \cN_{\ell}\}.$$

Let $\cW_P = \{p \ \text{prime} : p \mid d_P\}$, including $2$ if necessary. Clearly we have $L(\cW_P)\ll \log d_P$. Therefore Lemma~\ref{lem:powerwithsunit} shows that there exists a constant $C_{E,P,\ell}>0$, such that for any prime $q>C_{E,P,\ell}$ we have $qP\not \in \cD_{\ell,\cW_P}$, where $\cD_{\ell,\cW_P}$ be as in \eqref{eqn:dlw}.

\subsection{Elliptic sieve and powers up to a $\cW(R)$-unit}\label{sec:wunitoverEDS}
In this section we follow the sieving argument as in the proof of \cite[Theorem 1.6]{BLMN}, and prove stronger versions of Lemma~\ref{lem:powerwithsunit}, over the intervals whose lengths are much larger than $\sharp\, \cW(R)$.

\begin{lem}\label{lem:sq+sqwithsunit}
    Let $E/\Q$ be any elliptic curve as in $\eqref{eq:Weier}$ and $P\in E(\Q)$ be any non-torsion point. Then for any fixed $\ell$, we have
\begin{equation}\label{eqn:dlpw}
\sharp\, \cD_{\ell, P,\cW(R)}(M,N)\ll   N \, \frac{\log \, \max\{R,\log N\}}{\log  N}.
\end{equation}
\end{lem}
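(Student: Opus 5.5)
We sieve the set $\cD_{\ell,P,\cW(R)}(M,N)$ by primes $q$ in a dyadic-type range $(Y,2Y]$, with $Y$ to be chosen. The starting point is the identity $d_{qmP}$ versus $d_{mP}$. For a prime $p\notin \cW(R)$ with $\rho_p=q$ a prime exceeding $R$, the standard valuation property of elliptic divisibility sequences gives $\nu_p(d_{nP})=\nu_p(d_{qP})+\nu_p(n/q)$ whenever $q\mid n$ (for $p$ odd and of good reduction), and $\nu_p(d_{nP})=0$ otherwise.

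\textbf{Step 1 (exceptional primes).} Call a prime $q>\max\{R,C_{E,P,\ell}\}$ \emph{good*} if $d_{qP}$ has a prime divisor $p$ with $\rho_p=q$ and $\ell\nmid\nu_p(d_{qP})$. The primitive divisor theory (Silverman) shows that $d_{qP}$ has primitive prime divisors. Moreover, by the remark preceding the lemma, $qP\notin\cD_{\ell,\cW_P}$. Hence, modulo primes in $\cW_P$ (which have $\rho_p=1$) and non-primitive primes (which divide $d_P$, since $q$ is prime), the primitive part of $d_{qP}$ is not an $\ell$th power. So some primitive $p$ has $\ell\nmid\nu_p(d_{qP})$, and such $p\notin\cW(R)$ because $\rho_p=q>R$. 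Thus every prime $q>\max\{R,C\}$ is good*.

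\textbf{Step 2 (sieve condition).} For $n\in\cD_{\ell,P,\cW(R)}(M,N)$ and a good* prime $q$ with witness $p$, we need $\ell\mid \nu_p(d_{nP})$. Hence either $q\nmid n$, or $q\mid n$ with $\nu_p(n/q)\equiv-\nu_p(d_{qP})\pmod \ell$. In the latter case $p^{m}\mid n/q$ for some $m$ in a fixed residue class mod $\ell$ with $m\ge1$ (the class is nonzero), so $pq\mid n$ with $p>q$ (since $p\equiv\pm1\bmod q$, roughly $p\ge 2q-1$). We therefore discard the $n$ with $pq\mid n$. Summing over $q\in(Y,2Y]$, these number at most $\sum_q (N/(pq)+1)\ll N/Y+Y$. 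The remaining $n$ are simply those $n\in(M,M+N]$ avoiding all residues $0\bmod q$ for $q\in(\max\{R,C\},z]$.

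\textbf{Step 3 (counting).} Take $z=N^{1/2}$ (or $N^{\delta}$) and apply the large sieve, or simply Brun's/Selberg's upper bound sieve for integers in an interval with no prime factor in $(w,z]$, where $w=\max\{R,C,\log N\}$. This gives $\ll N\prod_{w<q\le z}(1-1/q)\ll N\log w/\log N$ by Mertens, uniformly in $M$. The discarded contributions from Step 2, summed dyadically over $Y\in(w,z]$, total $\ll N\sum_{Y}1/Y+z\ll N/w+N^{1/2}$, which is acceptable.

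The main obstacle is Step 2. It requires the exact $p$-adic growth $\nu_p(d_{nP})=\nu_p(d_{\rho_pP})+\nu_p(n/\rho_p)$ and uniform control of exceptional $p$ (primes $2,3$, bad reduction, $p\mid d_P$), all of which lie in a fixed finite set absorbed into $C_{E,P,\ell}$. I would first try to cite \cite[Lemma~2.1]{HLS} for this valuation formula. Otherwise, I would derive it from the formal group, following the setup of \cite{BLMN}.
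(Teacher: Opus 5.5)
Your proposal is correct and is essentially the paper's proof. Lemma~\ref{lem:powerwithsunit} with $\cW_P$ gives, for each large prime $q$, a witness $p_q$ with $\rho_{p_q}=q>R$ and $\ell\nmid\nu_{p_q}(d_{qP})$. The valuation identity then forces $qp_q\mid n$ whenever $q\mid n$ and $n\in\cD_{\ell,P,\cW(R)}(M,N)$. Those $n$ are counted directly, and the remaining $n$ are handled by the fundamental lemma of the sieve, Mertens, and the choice $y=N^{1/2}$. These are exactly the paper's sets $\cA_1$ and $\cA_0$.

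The one slip is your justification of $p>q$. The congruence $p\equiv\pm1\pmod q$ is a Lucas-sequence phenomenon and does not hold for elliptic divisibility sequences, and $p>q$ need not hold either. What Hasse--Weil gives is $q=\rho_p\mid\sharp\,\widetilde E(\F_p)\le p+1+2\sqrt p$, hence $p\gg q$. That is all you need for $\sum_q N/(pq)\ll N\sum_q q^{-2}$, and it is what the paper uses.
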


\begin{proof}
We can assume that $R\ge \log N$, as $\sharp\, \cD_{\ell, P,\cW(R)}(M,N)$ is an increasing function of $R$. 

Lemma~\ref{lem:powerwithsunit} implies that, for any prime $q>C_{E,P,\ell}$, there exists a prime $p_q \not\in \cW_P$ with 
$$\nu_{p_q}(d_{qP})\ne 0\pmod {\ell}.$$
We then consider the following set of primes
$$
\Lambda_P(R)=\left\{q \text{ prime} : q>R+C_{E,P,\ell}\right\}.
$$
For any $q \in \Lambda_P(R)$, we have $p_q \notin \cW(R)$. To see this, note that $p_q \mid d_{qP}$. Since $p_q$ does not divide $d_P$, we have 
\begin{equation}\label{eqn:threshold}
\rho_{p_q} = q > R,
\end{equation}
and hence $p_q \not \in \cW(R)$, as we claimed.

Let $n\in (M,M+N]$ and $q\in \Lambda_{P}(R)$ with $q\mid n$ and $\gcd(p_q,n/q)=1$. 
Then, by \cite[Lemmas 2.3, 2.4, and 3.4]{BLMN}, we have
\begin{equation}\label{eqn:idenity}
\nu_{p_q}(d_{nP})=\nu_{p_q}(d_{qP})\ne 0 \pmod {\ell}.
\end{equation}
In particular, any such $n$ is not in $\cD_{\ell,P,\cW(R)}(M,N)$. 

Without loss of generality we may assume that $R>C_{E,P,\ell}$, as otherwise Lemma~\ref{lem:sq+sqwithsunit} holds trivially. In particular, $\Lambda_P(R)$ contains the set of all primes larger than $2R$. 

Now, let $y$ be a parameter to be chosen suitably later, satisfying $2R<y\le M+N$. Denote $\cA_0(M,N; y)$ be the set of integers $n\in (M, M+N]$ without a prime factor in $(2R,y]$. Moreover, we denote $\cA_1(M,N; y)$ be the complement of $\cA_0(M,N; y)$ in $\cD_{\ell,P,\cW(R)}(M,N)$.

With the observation in \eqref{eqn:idenity}, we can write
$$\sharp\,\cD_{\ell,P,\cW(R)}(M,N)\le \sharp\, \cA_0(M,N; y) +\sharp \, \cA_1(M,N; y).$$

By the fundamental lemma of combinatorial sieve \cite[Theorem 4.4]{Tenenbaum},
\begin{align*}
\sharp\, \cA_0(M,N; y) &\ll N \prod_{2R<q \le y}\(1-\frac{1}{q}\)+y\ll N \, \frac{\log R}{\log y}+y.
\end{align*}
On the other hand, we also have 
\begin{align*}
			\sharp\, \cA_1(M,N; y)
			&\ll  \sum_{\substack{2R < q \le y}}
			\sharp\,\{ n\in (M,M+N] : q p_q \mid n \}\\ 
			&\ll  \sum_{\substack{2R < q \le y}}
			\(\frac{N}{q p_q}+1\)\ll  \sum_{2R<q\le y}\(\frac{N}{q^2}+1\)
		\ll \frac{N}{R}+y,
\end{align*}
where we use the fact that $q=\rho_{p_q}\ll p_q$, by Hasse-Weil bound.

On the other hand we can assume that $2R<N^{1/2}$, as otherwise \eqref{eqn:dlpw} holds trivially. The proof now concludes, by choosing $y=N^{1/2}$.   
\end{proof}

\begin{rem}\label{rem:R>logN}
Certainly, the estimate in Lemma~\ref{lem:sq+sqwithsunit} is nontrivial for any $R$ and $N$ satisfying $R=\exp\(\frac{\log N}{F(N)}\)$, with any function $F(n)\to \infty$ as $n\to \infty$. However, Lemma~\ref{lem:powerwithsunit} provides a nontrivial estimate only in the range $R\ll (\log N)^{1/3}$.
\end{rem}

Let us denote $\Psi(N,y)$ be the number of $y$-smooth positive integers up to $N$, and as usual, we say that a positive integer $n$ is $y$-smooth if any prime factor of $n$ is at most $y$. Now, we also have the following version of Lemma~\ref{lem:sq+sqwithsunit} for $M=0$.
\begin{lem}\label{lem:sq+sqwithsunitm0}
    Let $E/\Q$ be any elliptic curve as in $\eqref{eq:Weier}$ and $P\in E(\Q)$ be any non-torsion point. Then for any fixed $\ell$, we have
\begin{equation*}
\sharp\, \cD_{\ell, P,\cW(R)}(0,N)\ll  \, \Psi(N,2R)+N/R.
\end{equation*}
\end{lem}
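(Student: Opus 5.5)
The plan is to reuse the sieve set-up from the proof of Lemma~\ref{lem:sq+sqwithsunit} verbatim, but with $M=0$ and a different split of $(0,N]$. As before, assume $R>C_{E,P,\ell}$; otherwise the bound is trivial after adjusting constants. Then every prime $q>2R$ lies in $\Lambda_P(R)$, and it comes with a prime $p_q\notin\cW(R)$ such that $\rho_{p_q}=q$ and $\nu_{p_q}(d_{qP})\not\equiv 0 \pmod{\ell}$. By \eqref{eqn:idenity}, if $q\mid n$ and $\gcd(p_q,n/q)=1$, then $\nu_{p_q}(d_{nP})\not\equiv 0\pmod \ell$. Since $p_q\notin\cW(R)$, such an $n$ cannot lie in $\cD_{\ell,P,\cW(R)}(0,N)$.

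Next I split $n\in\cD_{\ell,P,\cW(R)}(0,N)$ according to whether $n$ has a prime factor $q>2R$. If it has none, then $n$ is $2R$-smooth, and these $n$ contribute at most $\Psi(N,2R)$. This is the step where $M=0$ pays off: on $(0,N]$ the integers free of primes in $(2R,N]$ are exactly the $2R$-smooth ones. So no upper sieve parameter $y$ and no $+y$ error term are needed, which removes the loss from the fundamental lemma.

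Otherwise $n$ has some prime divisor $q>2R$. The argument above shows that $q\mid n$ must then fail $\gcd(p_q,n/q)=1$, i.e.\ $qp_q\mid n$; note $p_q\ne q$ is harmless, since if $p_q=q$ then $q^2\mid n$ and the same count applies. Hence this part is bounded by
\[
\sum_{2R<q\le N}\sharp\,\{n\le N:\ qp_q\mid n\}\le \sum_{2R<q\le N}\frac{N}{qp_q}.
\]
There is no $+1$ term here, because we only count multiples of $qp_q$ in $(0,N]$, and their number is $\lfloor N/(qp_q)\rfloor$. This point also uses $M=0$ and is what avoids the $+y$ loss. By the Hasse--Weil bound, $q=\rho_{p_q}\le p_q+1+2\sqrt{p_q}$, so $p_q\gg q$. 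The sum is therefore $\ll N\sum_{q>2R}q^{-2}\ll N/R$.

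Adding the two contributions gives $\Psi(N,2R)+N/R$. The only delicate point, which I expect to be the main thing to verify, is the floor-function observation in the second case: one must check that every term with $qp_q>N$ contributes zero, so that summing over all $q$ up to $N$ costs nothing extra. That holds because the count is $\lfloor N/(qp_q)\rfloor$ and not $N/(qp_q)+1$.
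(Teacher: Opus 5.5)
Your proposal is correct and follows the paper's proof essentially verbatim. The paper also reruns the sieve from Lemma~\ref{lem:sq+sqwithsunit} with the upper cutoff $y=N$, bounds the $2R$-smooth part by $\Psi(N,2R)$, and bounds the remaining part by $\sum_{2R<q\le N} N/(qp_q)\ll N/R$ using $q=\rho_{p_q}\ll p_q$ from Hasse--Weil. Your remarks that no $+1$ (or $+y$) error term arises when $M=0$, and that the case $p_q=q$ is harmless, only spell out what the paper leaves implicit.
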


\begin{proof}
We argue exactly as in the proof of Lemma~\ref{lem:sq+sqwithsunit}. The only difference is that, we take $y=N$. Then we have 
 $$\sharp\, \cA_0(0,N; N)\le \, \Psi(N,2R)\quad \mathrm{and}\quad \sharp\, \cA_1(0,N; N)\ll \, N/R,$$
as we can write
\begin{align*}
			\sharp\, \cA_1(0,N; N)
			&\ll  \sum_{\substack{2R < q \le N}}
			\sharp\,\{ n\in (0,N] : q p_q \mid n \}\\ 
			&\ll  \sum_{\substack{2R < q \le N}}
			\frac{N}{q p_q}\ll  \sum_{2R<q\le N}\frac{N}{q^2}
		\ll \frac{N}{R}.
\end{align*}
The proof concludes.
\end{proof}

Therefore, Lemma~\ref{lem:sq+sqwithsunitm0} suggests that we need a comparison between $\Psi(N,R)$ and $N/R$. Writing $u=\frac{\log N}{\log R}$, we deduce from \cite[Corollary, Page~15]{CEP83} that
\begin{equation}\label{eqn:smooth}
\Psi(N,R) \le N \exp\big((-1+o(1))\, u \log u\big),
\end{equation}
provided that $u  \to \infty$ and $R > (\log N)^2$. Therefore, taking any $R$ satisfying 
\begin{equation}\label{eqn:newR}
\frac{\log N}{\log R}\(\log \log N -\log \log R\)=\log R,
\end{equation}
we obtain 
\begin{equation}\label{eqn:psinrbound}
\Psi(N,R)\le N/R^{1+o(1)}.
\end{equation}

\subsection{Proof of Theorem~\ref{thm:overEDS}}
To prove \eqref{eqn:generalM}, we follow a similar argument as in the proof of Theorem~\ref{thm:prod}. 
The only difference is that we split the $s$-tuples $(n_1,\ldots,n_s)\in \cD_{\ell,\Psf,\isf}(M,N)$ into two types. 

First, those for which, for every $i=1,\ldots,s$ we have
\begin{equation}\label{eqn:foreach}
\nu_p(d_{n_iP_i}) \not\equiv 0 \pmod \ell \implies p \in \cW(R).
\end{equation}
Second, those for which \eqref{eqn:foreach} fails for at least one $1 \le i \le s$.

For any parameter $R$ with $\log N \le R < N$, Lemma~\ref{lem:sq+sqwithsunit} shows that the number of $s$-tuples satisfying \eqref{eqn:foreach} is $\ll N^s \left(\frac{\log R}{\log N}\right)^s$, while Lemma~\ref{lem:divbyp} implies that the number of $s$-tuples failing \eqref{eqn:foreach} is $\ll N^s / R$. 

Combining these bounds, we derive
\[
\sharp\, \cD_{\ell,\Psf,\isf}(M,N) \ll N^s \left(\frac{\log R}{\log N}\right)^s + \frac{N^s}{R}.
\]
The proof of \eqref{eqn:generalM} concludes by taking $R = (\log N)^s$.

To prove \eqref{eqn:M0}, we take the $R$ satisfying \eqref{eqn:newR}. Then for each $1\le t\le s$, we count the number of $s$-tuples $(n_1, \ldots, n_s) \in \cD_{\ell,\Psf,\isf}(0,N)$ for which \eqref{eqn:foreach} fails for $t$ many $i$ in $\{1,\ldots,s\}$. Without loss of generality, we may assume that the corresponding integers are  $n_1,\ldots,n_t$, and $n_{t+1},\ldots,n_s$, respectively.

By \eqref{eqn:psinrbound}, the number of such $(s-t)$-tuples $\(n_{t+1},\ldots,n_s\) \in (0,N]^{s-t}$ is bounded by
$$ \(\frac{N}{R^{1+o(1)}}\)^{s-t}.$$

As in the proof of Theorem~\ref{thm:prod}, we again consider the graph $\cG_{t}$, where the vertices are $(n_1,\ldots,n_t)$, and the edges are defined by the condition  
\begin{equation*}
\nu_p(d_{n_iP_i})\not\equiv 0 \pmod \ell \quad \mathrm{and}\quad \nu_p(d_{n_jP_j})\not\equiv 0 \pmod \ell,
\end{equation*}
for some prime $p\not \in \cW(R)$. Then, again as in the proof of Theorem~\ref{thm:prod}, applying Lemma~\ref{lem:divbyp} and Lemma~\ref{lem:GraphCover}, the number of such $t$-tuples $(n_1,\ldots, n_t)\in (0,N]^t$ is $O\(\frac{N^t}{R^{\rf{t/2}}}\).$

Therefore, summing over $t=0, \ldots, s$, we finally derive
\[
\sharp\, \cD_{\ell,\Psf,\isf}(0,N) \le \, \sum_{t=0}^{s}\(\frac{N}{R^{1+o(1)}}\)^{s-t} \, \frac{N^t}{R^{\rf{t/2}}}\le \, \frac{N^{s}}{R^{\lceil s/2 \rceil \(1+o(1)\)}}.
\]

Also, it is clear from \eqref{eqn:newR} that
\[(\log N \log \log N)^{1/2} \ge \log R \ge \frac{1}{\sqrt{2}}\left(\log N \, (\log\log N - 2\log\log\log N)\right)^{1/2},
\]
and the proof concludes.
\qed
\begin{rem}\label{rem:mostun}
   Recall the sequence $(u_n)$ introduced in Section~\ref{sec:moremotivs}. By the Hasse-Weil bound, for any $R>1$, we have
\[
\{n \in (0,N] : u_n < R\} \subseteq \cD_{2, P,\cW(O(R))}(0,N).
\]

In particular taking $R=\exp\(\frac{\log N}{F(N)}\)$, for any function $F(n)\to \infty$ as $n\to \infty$, it follows from \eqref{eqn:smooth} that $u_n$ admits a subpolynomial lower bound for almost all $n \in (0,N]$. This, however, is still far from the quadratic-exponential growth predicted by the abc-conjecture.
\end{rem}

\subsection{Proof of Theorem~\ref{thm:withx}}
We argue exactly as in the proof of Theorem~\ref{thm:overEDS}. For this, one of the ingredients is already available, due to Lemma~\ref{lem:divbyp}. Therefore, we only need an analogue of Lemma~\ref{lem:sq+sqwithsunitm0}. This is straightforward for $\ell>2$, as 
\begin{equation*}
x(nP)\in \mathcal{O}^{*}_{\Q,\cW(R)} \cdot \cQ_{\ell} \implies d_{nP}\in \mathcal{O}^{*}_{\Q,\cW(R)} \cdot \cN_{\ell}.
\end{equation*}

For $\ell=2$, we use the identity
\begin{equation*}
|x(nP) - x(mP)|=\frac{|\psi_{n+m}(P)\psi_{n-m}(P)|}{\psi^2_n(P)\psi^2_m(P)},\quad \mathrm{for~any }~n>m\ge 1,
\end{equation*}
where $\psi_n$ denotes the $n$th division polynomial; see \cite[Exercise 3.7]{Silv-Book}. Moreover, it follows from  \cite[Theorem A]{Ayad92} that $\nu_p(d_{nP})=\nu_p(|\psi_n(P)|)$ for any $n\ge 1$, and for any prime $p \nmid d_P$ at which $E$ has a good reduction.

In particular if $x(mP)=0$ for some $m\ge 1$, then for any $n>m$, we have
$$x(nP)\in \mathcal{O}^{*}_{\Q,\cW(R)} \cdot \cQ_{2} \implies d_{(n+m)P}\in \mathcal{O}^{*}_{\Q,\cW'(R)} \cdot \cN_{2},$$
where $\mathcal{W}'(R)$ is the union of $\mathcal{W}(R)$ with a finite set of primes depending only on $E$ and $P$. Thus, one can again use Lemma~\ref{lem:sq+sqwithsunitm0}. The proof concludes.
\qed

\section{Products over all points of a bounded height}\label{sec:prodoverallpts}
\subsection{Counting points in a family of lattices}\label{sec:maria}
Suppose that $E(\Q)$ has rank $r$, and $G_1, \dots, G_r$ generate the free part of $E(\Q)$. For any  prime $p$ of good reduction, consider the lattices in $\Z^r$ given by
\[
\Lambda_p = \left\{\vec{n} \in \Z^r : n_1 G_1+\cdots +n_rG_r \equiv O \pmod p\right\},
\]
where again $O$ is the point at infinity for $E$, and 
\[
\Lambda'_p = \left\{\vec{n} \in \Z^r : n_1 G_1+\cdots +n_rG_r \in E(\Q)_{\mathrm{tor}} \pmod p\right\}.
\]

Here, we denote $E(\Q)_{\mathrm{tor}}$ be the subgroup of torsion points in $E(\Q)$. Moreover, for the definition of reduction modulo prime $p$ of a rational point $P$, or equivalently, for the meaning of the standard notation $P \pmod p$, we refer the reader to \cite[Definition~2.2]{BLMN}.

\begin{lem}\label{lem:latticesoffullrank}
For any  prime $p$ of good reduction, both $\Lambda_p$ and $\Lambda'_p$ are lattices of full ranks in $\Z^r$.
\end{lem}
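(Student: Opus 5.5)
The plan is to realise $\Lambda_p$ and $\Lambda'_p$ as kernels of group homomorphisms from $\Z^r$ into finite groups. Then finite index, and hence full rank, is immediate. For a prime $p$ of good reduction, the reduction map $E(\Q)\to \widetilde{E}(\F_p)$, $P\mapsto P \pmod p$, is a group homomorphism (see \cite[Definition~2.2]{BLMN} and \cite[Proposition~VII.2.1]{Silv-Book}). Composing with
\[
\phi:\Z^r\to E(\Q),\qquad \vec n\mapsto n_1G_1+\cdots+n_rG_r,
\]
we obtain a homomorphism
\[
\pi_p:\Z^r\to \widetilde{E}(\F_p).
\]
By definition $\Lambda_p=\ker \pi_p$, so $\Lambda_p$ is a subgroup of $\Z^r$. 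Its index satisfies
\[
[\Z^r:\Lambda_p]=\sharp\,\pi_p(\Z^r)\le \sharp\,\widetilde{E}(\F_p)\le p+1+2\sqrt p
\]
by the Hasse--Weil bound.

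A subgroup of $\Z^r$ is free of rank at most $r$. If it has finite index, its rank is exactly $r$, since $\Z^r/\Lambda$ finite forces $\Lambda\otimes\Q=\Q^r$. So $\Lambda_p$ is a lattice of full rank. Concretely, $\sharp\,\widetilde{E}(\F_p)\,\Z^r\subseteq \Lambda_p$, because $N\cdot \pi_p(\vec n)=O$ with $N=\sharp\,\widetilde{E}(\F_p)$ by Lagrange.

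For $\Lambda'_p$, I read the condition as: $\pi_p(\vec n)$ lies in the image $T_p$ of $E(\Q)_{\mathrm{tor}}$ under reduction. Since $T_p$ is a subgroup of $\widetilde{E}(\F_p)$, the set $\Lambda'_p=\pi_p^{-1}(T_p)$ is a subgroup of $\Z^r$. It contains $\Lambda_p$, hence also has finite index and full rank. An alternative reading is that $\vec n\cdot G \equiv T \pmod p$ for some $T\in E(\Q)_{\mathrm{tor}}$; that is the same set, so nothing changes.

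The only point needing care is the homomorphism property of reduction at a prime of good reduction. For points not reducing to $O$ it is simply coordinatewise reduction of the minimal-form coordinates $(a_P/d_P^2, b_P/d_P^3)$; points with $p\mid d_P$ go to $O$. Because the model is smooth mod $p$, the group law commutes with reduction, which is standard. I do not expect any genuine obstacle. I would also record the index bound $[\Z^r:\Lambda_p]\le \sharp\,\widetilde{E}(\F_p)$ explicitly, since the later lattice-point counts will need it.
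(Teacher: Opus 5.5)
Your proof is correct and is essentially the paper's argument. The paper also identifies $[\Z^r:\Lambda_p]$ with the size $\rho_{E,p}$ of the image of $\Z^r$ in $\widetilde{E}(\F_p)$, bounds this by $\sharp\,\widetilde{E}(\F_p)$, and uses $[\Z^r:\Lambda'_p]\le\rho_{E,p}$, which is your containment $\Lambda_p\subseteq\Lambda'_p$. Your version makes explicit the homomorphism/kernel structure that the paper leaves implicit; the paper additionally records $\rho_{E,p}\le \sharp T_p\cdot[\Z^r:\Lambda'_p]$ for later use, and this follows at once from your description $\Lambda'_p=\pi_p^{-1}(T_p)$.
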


\begin{proof}
Denoting 
\begin{equation*}
\rho_{E,p}=\sharp\,\left\{ n_1 G_1 +\cdots+n_r G_r \pmod p\right\},
\end{equation*}
we have
\begin{equation}\label{eqn:LambdavsLambda'}
 \rho_{E,p} = \sharp\,\(\Z^r/\Lambda_p\)\le \sharp\,\{P \pmod p: P\in E(\Q)_{\mathrm{tor}}\}\cdot \sharp\,\(\Z^r/\Lambda'_p\),
\end{equation}
and
\begin{equation}\label{eqn:ep}
\sharp\,\(\Z^r/\Lambda'_p\)\le \rho_{E,p}.
\end{equation}

For any prime $p$ of good reduction, we have $\rho_{E,p}\le \sharp\, \widetilde{E}(\F_p)$, where $\widetilde{E}$ denotes the reduction of $E$ modulo $p$. In particular, by \eqref{eqn:ep} we find that both $\sharp\,\(\Z^r/\Lambda_p\)$ and $\sharp\,\(\Z^r/\Lambda'_p\)$ are finite, for each prime $p$ of good reduction. The proof concludes.
\end{proof}

Now consider the box
$$\cB(X)=\{\vec{x}\in \R^r: \|\vec{x}\|_{\infty}\le X\},$$
where $\|\cdot\|_{\infty}$ denotes the sup norm, and let $\Lambda$ be a lattice in $\Z^r$.

For any $\vec{m} \in \Z^r$, we denote by $N_{m,\Lambda}(X)$ the size of $\cB(X)\cap \(\vec{m}+\Lambda\)$, where by $\vec{m}+\Lambda$ we mean the translation of the lattice $\Lambda$ by the vector $\vec{m}$. Then, we record the following presumably well-known estimate.

\begin{lem}\label{lem:latticepointcounting}
Let $\Lambda$ be a lattice in $\Z^r$ of rank $r$. Then, we have
\[N_{m,\Lambda}(X)= \frac{\mathrm{Vol}\(\cB(X)\)}{\sharp\,(\Z^r/\Lambda)}+O\((X+\|\vec{m}\|_{\infty})^{r-1}\),
\]
where $\mathrm{Vol}(\mathcal{B}(X))$ is the volume of $\mathcal{B}(X)$, and the implied constant depends only on $r$.
\end{lem}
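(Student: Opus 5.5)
We would prove Lemma~\ref{lem:latticepointcounting} by induction on $r$, putting the lattice $\Lambda$ into Hermite normal form so that counting points of $\vec m+\Lambda$ in the box $\cB(X)$ reduces to repeatedly counting integers of an arithmetic progression in an interval. We will assume $X\ge 1$, which is the range used later. Some restriction of this kind is needed: for $X<1$ and $\vec m=\vec 0$ the origin always lies in $\cB(X)\cap\Lambda$, while the proposed error term $O(X^{r-1})$ tends to $0$.

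\emph{Step 1: triangular basis.} Since $\Lambda\subseteq\Z^r$ has full rank, it has a basis $\vec b_1,\ldots,\vec b_r$ in (upper) Hermite normal form. Thus $\vec b_i$ has zero entries in coordinates $>i$ and positive $i$th entry $d_i$, with $d_1\cdots d_r=D:=\sharp\,(\Z^r/\Lambda)$. Consequently, the last coordinates of the points of $\vec m+\Lambda$ form exactly one residue class modulo $d_r$. For each admissible value $x_r$, the points of $\vec m+\Lambda$ with that last coordinate are a translate of the lattice $\Lambda'=\langle \vec b_1,\ldots,\vec b_{r-1}\rangle\subseteq\Z^{r-1}$, viewed in the first $r-1$ coordinates. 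This lattice has full rank and index $D/d_r$.

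\emph{Step 2: induction.} The case $r=1$ is the count of integers in a fixed class modulo $d_1$ lying in $[-X,X]$, which equals $2X/d_1+O(1)$. For $r\ge2$, the number of admissible $x_r\in[-X,X]$ is $2X/d_r+\theta$ with $|\theta|\le 1$. For each such $x_r$, the induction hypothesis gives
$$\frac{(2X)^{r-1}d_r}{D}+O(X^{r-2})$$
points in the slice. Multiplying out, we obtain the main term $(2X)^r/D=\mathrm{Vol}(\cB(X))/D$, plus two error contributions:
\begin{itemize}
\item $\theta (2X)^{r-1}d_r/D\ll X^{r-1}$, since $d_r\le D$;
\item $(2X/d_r+1)\,O(X^{r-2})\ll X^{r-1}$, since $X\ge1$.
\end{itemize}
This gives the error $O(X^{r-1})\le O((X+\|\vec m\|_\infty)^{r-1})$, with an implied constant depending only on $r$.

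\emph{Main obstacle.} The key point is uniformity in $\Lambda$. The argument must not lose factors depending on the shape of the lattice; a naive covering by fundamental parallelepipeds fails here because of their possibly huge diameter, which is why we use the triangular basis. In that approach the only potentially dangerous term is $\theta(2X)^{r-1}d_r/D$, and the inequality $d_r\le D$, valid since all $d_i\ge1$, disposes of it. Note that the translate $\vec m$ plays no role beyond fixing the residue classes, so the bound we obtain is in fact slightly stronger than stated.
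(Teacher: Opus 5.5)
Your proof is correct, and it takes a genuinely different route from the paper. The paper passes to the convex set $\cB(X)-\vec m\subseteq\cB(X+\|\vec m\|_\infty)$ and quotes Davenport's lemma in Schmidt's successive-minima form. Uniformity in $\Lambda$ there comes from $\lambda_1\ge 1$, and $\det\Lambda$ is identified with $\sharp\,(\Z^r/\Lambda)$ via Smith normal form.

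You instead use a triangular (Hermite) basis and slice along the last coordinate, so everything reduces to counting an arithmetic progression in an interval. The same integrality input appears as $d_i\ge 1$, hence $d_r\le D$, and $D=d_1\cdots d_r$ follows directly from the triangular basis. Your argument is self-contained and gives the sharper error $O(X^{r-1})$, independent of $\vec m$. The paper's enlargement to $\cB(X+\|\vec m\|_\infty)$ is lossy, though harmless in its application, where $X$ and $\|\vec m\|_\infty$ are both $O(H^{1/2})$. On the other hand, the paper's route applies to general convex sets, not just coordinate boxes.

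Your caveat about small $X$ is also right. The paper's own chain of inequalities ends with an error of size $1+(X+\|\vec m\|_\infty)^{r-1}$, so the statement tacitly needs $X+\|\vec m\|_\infty\gg 1$. The one case your restriction $X\ge 1$ leaves out, namely $X<1\le\|\vec m\|_\infty$, is trivial. There $\cB(X)\cap\Z^r=\{\vec 0\}$, so $N_{m,\Lambda}(X)\le 1$ and the main term is at most $2^r$, while $(X+\|\vec m\|_\infty)^{r-1}\ge 1$.
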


\begin{proof}
Consider the convex compact set $\cS=\cB(X)-\vec{m}$, contained in $\cB(X+\|\vec{m}\|_{\infty})$. Clearly, we have $N_{m,\Lambda}(X)=\sharp\, (\cS\cap \Lambda)$.

Denote by $\det(\Lambda)$ the determinant (or co-volume) of the lattice $\Lambda$, and let $\lambda_1\le \ldots \le \lambda_r$ be the successive minima of $\Lambda$. Following the proof of \cite[Lemma 1]{Sch95}, or in other words applying \cite{D51}, we have 
\begin{align*}
&\left|N_{m,\Lambda}(X)-\frac{\mathrm{Vol}\(\cB(X)\)}{\det(\Lambda)}\right|\\
&\qquad\qquad\quad\ll \(X+\|\vec{m}\|_{\infty}\)^{r-1}\max_{0\le i\le r-1}\frac{\lambda_{i+1} \ldots \lambda_{r}}{\det \Lambda}\\
&\qquad\qquad\quad \ll 1+ \(X+\|\vec{m}\|_{\infty}\)^{r-1}\max_{1\le i\le r-1}\frac{1}{\lambda_1\ldots \lambda_i},
\end{align*}
where we are using \cite[Equation~(2.1)]{Sch95} in the last line above, and the implied constants all depend on $r$. We get the desired error term, as clearly $\lambda_1\ge 1$, since $\Lambda$ is a lattice in $\Z^r$.

Since $\Lambda$ is of full rank, we can write 
$$\Lambda=\{A\cdot \vec{n}:\vec{n}\in \Z^r\},$$
for some invertible $r\times r$ matrix $A$ with integer entries. We then have, $\det(\Lambda)=|\det(A)|$. The proof concludes, noting that $|\det(A)|=\sharp\,(\Z^r/\Lambda)$; to see this, one can for instance use the Smith normal form (see \cite[Corollary 1.13]{N12}) of the matrix $A$.
\end{proof}

Let us now denote
$$\cE_{p}(H)=\{P\in E(\Q) :\h(P)\le H,~\nu_p\(x(P)\) \ne 0 \}.$$

As a consequence of Lemma~\ref{lem:latticepointcounting}, we deduce the following estimate.

\begin{lem}\label{lem:divbypoverallpointsbetter}
     Let $p$ be any prime of good reduction. We have
\begin{align*}
\sharp\, \cE_{p}(H)\ll \frac{H^{r/2}}{\rho_{E,p}}+H^{(r-1)/2}.
\end{align*}
\end{lem}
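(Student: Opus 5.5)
We will reduce the count to lattice points in a box. Write every $P\in E(\Q)$ as $P=T+n_1G_1+\cdots+n_rG_r$ with $T\in E(\Q)_{\mathrm{tor}}$ and $\vec n\in\Z^r$. There are only $O(1)$ choices of $T$, so it suffices to treat each $T$ separately. The canonical height restricted to the free part is a positive definite quadratic form in $\vec n$. Hence $\h(P)\le H$ forces $\|\vec n\|_\infty\le X$ with $X\asymp H^{1/2}$, where the implied constant depends only on $E$ and the choice of generators. So we will bound the number of $\vec n\in\cB(X)$ with $\nu_p(x(T+\vec n\cdot\vec G))\ne 0$.

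The key step is to show that, for a prime $p$ of good reduction, the set of such $\vec n$ is contained in a union of $O(1)$ translates $\vec m+\Lambda_p$. We may also take $\|\vec m\|_\infty\ll 1$; this is harmless since $\Lambda_p\supseteq \rho_{E,p}\Z^r$ only gives representatives of size $O(\rho_{E,p})$, which is why we need care. Distinguish the two cases of $\nu_p(x(P))\ne0$.

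\emph{Case $\nu_p(x(P))<0$.} Here $P\equiv O\pmod p$, so $\vec n\cdot\vec G\equiv -T\pmod p$. This condition is a single coset of $\Lambda_p$ if it is nonempty.

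\emph{Case $\nu_p(x(P))>0$.} Here $P\pmod p$ lies among the at most two points of $\widetilde E(\F_p)$ with $x=0$. So again $\vec n\cdot\vec G$ lies in at most two cosets of $\Lambda_p$, namely the preimages of the (at most two) residues $R-T$.

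Thus in all cases we have at most $3$ cosets per $T$. To control the representatives, pick in each coset a representative $\vec m$ that lies in $\cB(X)$. If no such representative exists, the coset contributes nothing. With this choice, $\|\vec m\|_\infty\le X$.

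Apply Lemma~\ref{lem:latticepointcounting} to each coset with this $\vec m$, using that $\Lambda_p$ has full rank by Lemma~\ref{lem:latticesoffullrank}. Each coset contributes
\[
\frac{\mathrm{Vol}(\cB(X))}{\sharp(\Z^r/\Lambda_p)}+O\bigl((2X)^{r-1}\bigr)\ll \frac{X^r}{\rho_{E,p}}+X^{r-1},
\]
since $\sharp(\Z^r/\Lambda_p)=\rho_{E,p}$ by \eqref{eqn:LambdavsLambda'}. Summing over $O(1)$ torsion points $T$ and $O(1)$ cosets, and substituting $X\asymp H^{1/2}$, gives $\sharp\,\cE_p(H)\ll H^{r/2}/\rho_{E,p}+H^{(r-1)/2}$.

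The main obstacle is the coset description in the second case. We must check that the reduction map from $E(\Q)$ to $\widetilde E(\F_p)$ is a homomorphism at good primes, so that fibres over a fixed residue are cosets of $\Lambda_p$ shifted by torsion. We must also check that $\nu_p(x(P))>0$ genuinely corresponds to reduction to a point with $x\equiv0$, using the minimal-model/integrality of \eqref{eq:Weier} at $p$. The choice of a representative $\vec m$ inside the box is what keeps the error term in Lemma~\ref{lem:latticepointcounting} at $X^{r-1}$ rather than $(X+\rho_{E,p})^{r-1}$.
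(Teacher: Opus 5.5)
Your proof is correct and is essentially the paper's argument. The paper works with $\Lambda'_p$, which absorbs the torsion and has index $\gg\rho_{E,p}$ by \eqref{eqn:LambdavsLambda'}, and it gets a shift of size $\ll H^{1/2}$ by fixing some $P_0\in\cE_p(H)$ with $\nu_p(x(P_0))>0$ and using $P\equiv\pm P_0\pmod p$; this is the same device as your choice of a coset representative inside $\cB(X)$, the only difference being that you split off $T$ and use $\Lambda_p$ of exact index $\rho_{E,p}$. Your early remark that one may take $\|\vec m\|_\infty\ll 1$ is false in general, but it is never used, since you later (correctly) take $\|\vec m\|_\infty\le X$.
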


\begin{proof}
Note that 
$$P=n_1G_1+\cdots+n_rG_r+t,~t\in E(\Q)_{\mathrm{tor}}\quad \mathrm{with}\quad \nu_p(x(P))<0,$$
implies that 
$$\vec{n}=(n_1,\ldots,n_r)\in \Lambda'_p.$$

On the other hand, for any two points $P_1$ and $P_2$ with $$\nu_p(x(P_1))>0\quad \mathrm{and}\quad \nu_p(x(P_2))>0,$$ 
we have $P_1=\pm P_2 \pmod p$. In particular, either $\cE_{p}(H)$ is empty or there exists some $P_0\in \cE_{p}(H)$ with the following property: for any
$$P=n_1G_1+\cdots+n_rG_r+t,~t\in E(\Q)_{\mathrm{tor}}\quad \mathrm{with}\quad \nu_p(x(P))>0,$$
we have
$$\vec{n}=(n_1,\ldots,n_r)\in \pm \vec{m}+\Lambda'_p,$$
where we write
$$P_0=\sum_{i=1}^{r}m_iG_i+t_0,~t_0\in E(\Q)_{\mathrm{tor}},\quad \mathrm{and}\quad \vec{m}=(m_1,\ldots,m_r).$$

Since $P_0\in \cE_{p}(H)$, it follows that $\widehat{h}(P_0)\le H$. In particular, we have $\|\vec{m}\|_{\infty}\ll H^{1/2}$. This is because, $\|\vec{m}\|^2_{\infty}\ll \widehat{h}(P_0)$; see the proof of~\cite[Theorem~4.5]{Kowalski} for this well-known inequality. The proof now concludes by Lemma~\ref{lem:latticepointcounting} and Lemma~\ref{lem:latticesoffullrank}, as \eqref{eqn:LambdavsLambda'} implies $\rho_{E,p}\ll \sharp\,\(\Z^r/\Lambda'_p\)$.
\end{proof}

For any positive real number $R$, denote $\mathcal{T}(R)$ be the set of primes at which $E$ has a bad reduction, together with the primes $p$ of good reduction with $\rho_{E,p} \le R$. We have the following analogue of Lemma~\ref{lem:LandW}.

\begin{lem}\label{lem:TRbound}
Suppose $E(\Q)$ has rank $r$. Then for any $R \ge 2$, we have
\[L(\cT(R))\ll R^{1+\frac{2}{r}}.
\]
\end{lem}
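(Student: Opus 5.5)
The plan is to bound the contribution of good primes with $\rho_{E,p}\le R$; the bad primes form a fixed finite set and contribute $O(1)$ to $L(\cT(R))$. For a good prime $p$ with $\rho_{E,p}\le R$, the image of the free part $\langle G_1,\ldots,G_r\rangle$ in $\widetilde{E}(\F_p)$ has size at most $R$. So $\Lambda_p$ is a full rank sublattice of $\Z^r$ of index $\rho_{E,p}\le R$, by Lemma~\ref{lem:latticesoffullrank} and \eqref{eqn:LambdavsLambda'}. By Minkowski's second theorem, $\lambda_1\cdots\lambda_r\ll \det(\Lambda_p)\le R$. Hence $\Lambda_p$ contains a nonzero vector $\vec n$ with
$$\|\vec n\|_\infty\ll R^{1/r}.$$
For this vector the point $Q=n_1G_1+\cdots+n_rG_r$ is non-torsion, since the $G_i$ are independent, and it reduces to $O$ modulo $p$. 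Thus $p\mid d_Q$, and $\h(Q)\ll \|\vec n\|_\infty^2\ll R^{2/r}$, using the standard upper comparison between $\h$ and the quadratic form on $\Z^r$.

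Next I will collect all such points. Let $\cP(R)$ be the set of nonzero $\vec n\in\Z^r$ with $\|\vec n\|_\infty\le cR^{1/r}$. Every good prime in $\cT(R)$ divides $d_Q$ for some $Q$ coming from $\vec n\in\cP(R)$. Therefore
$$L(\cT(R))\le O(1)+\sum_{\vec n\in\cP(R)}\log d_{Q_{\vec n}}.$$
Since $\log d_Q\le \tfrac12 h(x(Q))\ll \h(Q)+1\ll R^{2/r}$, and $\sharp\,\cP(R)\ll R$, this gives $L(\cT(R))\ll R\cdot R^{2/r}=R^{1+2/r}$, as required.

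The only delicate step is the existence of the short vector with $\|\vec n\|_\infty\ll R^{1/r}$. This follows either from Minkowski's first theorem applied to $\Lambda_p$, or more elementarily from pigeonhole. For the pigeonhole version, consider the $(\lfloor R^{1/r}\rfloor+1)^r>R$ vectors in $[0,R^{1/r}]^r$. Their images in $\Z^r/\Lambda_p$ cannot all be distinct, so two of them coincide, and their difference is a nonzero vector of $\Lambda_p$ with sup norm at most $R^{1/r}$. The remaining inputs are the height comparisons $\h(Q)\ll\|\vec n\|^2$ and $\log d_Q\ll \h(Q)+1$, which are standard (the latter uses that the naive and canonical heights differ by $O(1)$).
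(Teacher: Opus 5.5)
Your proposal is correct and follows essentially the same route as the paper. The paper also takes a nonzero $\vec n\in\Lambda_p$ with $\|\vec n\|_\infty\le R^{1/r}$, though it imports this from the proof of \cite[Proposition~5.4]{AGM10} where you derive it by pigeonhole, and then sums $\log d_Q\ll \h(Q)\ll R^{2/r}$ over the $O(R)$ points in the box. Your separate $O(1)$ treatment of the finitely many bad primes and your use of $\h(Q)+1$ are small extra care that the paper's proof glosses over.
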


\begin{proof}
The proof of \cite[Proposition 5.4]{AGM10} implies that, for any prime $p \in \cT(R)$ we have $p \mid d_P$, for some
\[
P = n_1 G_1 + \cdots + n_r G_r,
\]
where $|n_i| \le R^{1/r}$ for each $i = 1,\ldots,r$. In particular, we can write
\[L(\cT(R))\le \sum_{\substack{P=n_1G_1 + \cdots + n_rG_r\\ \max\{|n_1|,\ldots, |n_r|\}\le R^{1/r}}} \log d_P.\]

On the other hand, we have $1\ll \h(P)$; see for instance \cite[Theorem VIII.9.10(a)]{Silv-Book}. In particular, we can write $\log d_P\ll \h(P)\ll R^{2/r}$. The proof concludes.
\end{proof}

\subsection{Proof of Theorem~\ref{thm:prodoverallpts}}
The proof is completely analogous to that of Theorem~\ref{thm:prod}, so we provide only a brief sketch. 

Let $1<R<H^{1/2}$ be a real number to be chosen suitably later, and first count the $s$-tuples $(P_1, \ldots, P_s)\in \cD_{\ell}(H)$, for which we have
$$\nu_p(d_{P_i})\not\equiv 0 \pmod \ell \implies p \in \cT(R),~\mathrm{for~some}~1\leq i\le s.$$

Lemma~\ref{lem:powerwithsunit} and Lemma~\ref{lem:TRbound} show that the number of such $s$-tuples is bounded by $$ H^{\frac{r_1+\ldots+r_s-r_{
\min}}{2}} \exp\(O\(R^{1+\frac{2}{r_{\min}}}\)\).$$

To count the remaining tuples $(P_1,\ldots, P_s)\in \cD_{\ell}(H)$, we consider the graph $\widehat{\cG}$ on such $s$ vertices $P_1, \ldots, P_s$, where the edges are defined by the condition 
\begin{equation*}
\nu_p(d_{P_i})\not\equiv 0 \pmod \ell \quad \mathrm{and}\quad \nu_p(d_{P_j})\not\equiv 0 \pmod \ell,
\end{equation*}
for some prime $p\not \in \cT(R)$.

As in the proof of Theorem~\ref{thm:prod}, $\widehat{\cG}$ has no isolated vertex. Therefore, by Lemma~\ref{lem:GraphCover}, there exists a subset $\cI$ of $\{1,\ldots, s\}$ with $m=\sharp\,  \cI\leq \fl{s/2}$, such that for any $j\in \{1,\ldots,s\}\setminus \cI$, there exists some $i\in \cI$, such that $P_i$ and $P_j$ satisfy \eqref{eqn:edge}. Then arguing exactly as in the proof of Theorem~\ref{thm:prod}, applying Lemma~\ref{lem:divbypoverallpointsbetter} (instead of Lemma~\ref{lem:divbyp}) and Lemma~\ref{lem:GraphCover}, the number of such $s$-tuples is 
\[O\(H^{\frac{r_1+\ldots+r_s}{2}}R^{-\rf{s/2}}\),\quad \mathrm{as}\quad R< H^{1/2}.
\]  

Therefore, we finally derive
$$\sharp\,\cD_{\ell}(H)\le \, H^{\frac{r_1+\ldots+r_s-r_{\min}}{2}} \exp\(O\(R^{1+\frac{2}{r_{\min}}}\)\)+O\(H^{\frac{r_1+\ldots+r_s}{2}}R^{-\rf{s/2}}\).$$

The proof concludes by taking $R=c(\log H)^{1/\(1+\frac{2}{r_{\min}}\)}$, for some sufficiently small constant $c>0$.
\qed

\section{Further applications of the underlying techniques}\label{sec:applications}

\subsection{Perfect powers in the single-variable shifted products}\label{sec:singlevariable}
Let us assume that $E_i=E$ for each $i=1,\ldots, s$, and let $P\in E(\Q)$ be a fixed non-torsion point. Consider the shifted products of the form 
\begin{equation*}
d_{(n+h_1)P} \cdots d_{(n+h_s)P},\quad n\in \mathbb{N}.
\end{equation*}

In this context, again recall that Hajdu, Laishram and Szikszai~\cite{HLS} have shown that, for any fixed $\ell\ge 2$, there are finitely many perfect $\ell$th powers among such products, uniformly over the shifts of the form
\[
\hsf = (h, 2h, \dots, sh), \quad h,s \in \mathbb{N}.
\]

In light of their result, for any fixed prime $\ell$ and any fixed integer $s\ge 2$, let us consider  
\begin{align*}
&S_{\ell,\Psf}(M,N,H)\\
&\quad\quad =\max_{\hsf \in \cH_s}\sharp\, \left\{ n\in   (M,M+N]: d_{(n+h_1)P}\cdots d_{(n+h_s)P} \in \cN_{\ell}\right\},
\end{align*}
where we denote 
$$\cH_s=\{(h_1,\cdots,h_s) \in (\Z\cap [-H,H])^s: h_i\ne h_j,~\mathrm{for}~1\le i\ne j\le s\}.$$
Assume that $2H<R$. Then for any prime $q>R$, we have
$$h_i\ne h_j \pmod q,\quad \mathrm{for~any}~1\le i\ne j\le s.$$

Using the same sieving argument as in the proof of Lemma~\ref{lem:sq+sqwithsunit}, one can easily derive the following estimate
\begin{align*}
&\max_{\hsf \in \cH_s}\sharp\, \left\{n\in (M,M+N]: d_{(n+h_i)P}\in \mathcal{O}^{*}_{\Q,\cW(R)}\cdot \cN_{\ell},~\for~i=1,\cdots,s\right\}\\
&\qquad\qquad \qquad\qquad\quad \ll N \, \(\frac{\log \, \max\{R,\log N\}}{\log  N}\)^s.
\end{align*}

For any remaining $n\in (M,M+N]$ with $d_{(n+h_1)P}\cdots d_{(n+h_s)P} \in \cN_{\ell}$, there exists a prime $p\not\in \cW(R)$ such that 
$$p\mid d_{(n+h_i)P}\quad \mathrm{and}\quad p\mid d_{(n+h_j)P}\quad \mathrm{for~some}~i\ne j.$$
In particular, we have
\begin{equation}\label{eqn:impcond}
\rho_p\mid h_i-h_j,\quad \mathrm{for~some}~i\ne j.
\end{equation}

However, \eqref{eqn:impcond} does not hold, as $\mathbf{h} \in \mathcal{H}_s$ and $2H < R$. Therefore, choosing $R=3H$, we finally derive the following result.

\begin{thm}\label{thm:singlevar}
Let $s,\ell\ge 2$ be fixed. Then uniformly over $M \ge 0$, we have
$$S_{\ell,\Psf}(M,N,H)\ll N \, \(\frac{\log \, \max\{H,\log N\}}{\log N}\)^{s}.$$
\end{thm}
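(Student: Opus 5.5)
Fix $\hsf=(h_1,\ldots,h_s)\in\cH_s$ and set $R=3H$. We may assume $\log N\le H$ after replacing $H$ by $\max\{H,\log N\}$: this only enlarges $\cH_s$ and $R$, and changes the right-hand side by a constant factor. Every admissible $n$, meaning one with $d_{(n+h_1)P}\cdots d_{(n+h_s)P}\in\cN_\ell$, falls into one of two classes.

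\emph{Class A}: for every $i$, every prime $p$ with $\nu_p(d_{(n+h_i)P})\not\equiv 0\pmod\ell$ lies in $\cW(R)$. Then $d_{(n+h_i)P}\in\cO^*_{\Q,\cW(R)}\cdot\cN_\ell$ for all $i$. We bound these $n$ with the simultaneous version of the sieve in Lemma~\ref{lem:sq+sqwithsunit}. For each prime $q\in\Lambda_P(R)$ we have the prime $p_q\notin\cW(R)$ with $\nu_{p_q}(d_{qP})\not\equiv 0\pmod\ell$ and $\rho_{p_q}=q$. By \eqref{eqn:idenity}, if $q\mid n+h_i$ and $p_q\nmid (n+h_i)/q$, then $n$ is not in Class A. Since $q>2R>2H$, the residues $-h_1,\ldots,-h_s$ are distinct modulo $q$.

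So every Class A integer $n$ satisfies one of two conditions. Either $\prod_i(n+h_i)$ has no prime factor in $(2R,y]$, or $q p_q\mid n+h_i$ for some $i$ and some $q\in(2R,y]$.

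For the first set we use the fundamental lemma of the sieve \cite[Theorem~4.4]{Tenenbaum} with $s$ excluded residue classes modulo each prime $q\in(2R,y]$. It gives
\[
\ll N\prod_{2R<q\le y}\(1-\frac{s}{q}\)+y\ll N\(\frac{\log R}{\log y}\)^s+y .
\]
Note that $q>2R>s$, so $1-s/q>0$. For the second set, since $q\ll p_q$ by Hasse--Weil, the count is
\[
\ll \sum_{2R<q\le y}s\(\frac{N}{q^2}+1\)\ll \frac{N}{R}+y .
\]
Taking $y=N^{1/2}$, with $2R<N^{1/2}$ since the statement is otherwise trivial, Class A has size $\ll N(\log R/\log N)^s+N/R$. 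Because $R\ge\log N$, the term $N/R$ is not obviously dominated by the first. This is the main obstacle, and I would handle it in one of two ways. The first option is to enlarge $R$ to $(\log N)^s$ inside the sieve; the sieve set $\cW(R)$ is only used for this class, and $\log R$ changes by a constant factor. The second option is to iterate the second-set bound, using that $s$ distinct shifts must each carry a factor $q p_q$.

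\emph{Class B}: some $i$ has a prime $p\notin\cW(R)$ with $\nu_p(d_{(n+h_i)P})\not\equiv0\pmod\ell$. Since the full product is an $\ell$th power, $\nu_p$ of some other factor $d_{(n+h_j)P}$, $j\ne i$, is also nonzero. Hence $p$ divides both $d_{(n+h_i)P}$ and $d_{(n+h_j)P}$. For elliptic divisibility sequences, $p\mid d_{mP}$ if and only if $\rho_p\mid m$ (for $p$ outside a finite bad set, which we absorb into $\cW(R)$). So $\rho_p\mid h_i-h_j$, and therefore $0<|h_i-h_j|\le 2H$. But $\rho_p>R=3H$ because $p\notin\cW(R)$, a contradiction. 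Thus Class B is empty.

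Combining the two classes and taking the maximum over $\hsf\in\cH_s$ gives the theorem. The delicate point, as noted, is making the $N/R$ error from the sieve subordinate to $N(\log\max\{H,\log N\}/\log N)^s$; I would fix it by decoupling the sieving threshold, taking it as $\max\{3H,(\log N)^s\}$, from the threshold $3H$ used in Class B.
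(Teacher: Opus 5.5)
Your proposal is correct and follows the paper's proof. The first class is bounded by the simultaneous $\cW(R)$-unit sieve, with $s$ excluded residue classes modulo each prime $q>2R$. The second class is empty because $\rho_p\mid h_i-h_j$ with $0<|h_i-h_j|\le 2H<3H=R<\rho_p$. Your decoupling of the sieve threshold to $\max\{3H,(\log N)^s\}$, justified by monotonicity of the first class in $R$, is the right way to absorb the $N/R$ term. The paper leaves that step implicit in ``one can easily derive''.
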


Thus, $S_{\ell,\Psf}(M,N,H)$ is $o(N)$ over the range $1\le H\le \exp\(\frac{\log N}{G(N)}\)$, for any function $G(N)\to \infty$ as $N\to \infty$.

\subsection{Products of images of rational functions}\label{sec:CcapH}
Assume again that we are given $s$ not necessarily distinct elliptic curves $E_1, \ldots, E_s$ as in \eqref{eq:Weier} over $\Q$ and of positive ranks. Fix an $s$-tuple $\Psf=(P_1, \ldots, P_s)$ of non-torsion points $P_i\in  E_i(\Q)$, for each $i=1,\ldots, s$.

As in the proof of Theorem~\ref{thm:withx}, one can estimate the number of $s$-tuples of rational points whose product of the $x$-coordinates is a perfect $\ell$th power. Again note that, one of the ingredients is already available, due to Lemma~\ref{lem:divbyp} or Lemma~\ref{lem:divbypoverallpointsbetter}. Therefore, we only need an analogue of Lemma~\ref{lem:powerwithsunit}. For $\ell>2$, we have
\begin{equation*}
x(P)\in \mathcal{O}^{*}_{\Q,\cW(R)} \cdot \cQ_{\ell} \implies d_{P}\in \mathcal{O}^{*}_{\Q,\cW(R)} \cdot \cN_{\ell}.
\end{equation*}

For $\ell=2$, the relevant family of curves is 
\begin{equation*}
C'_u:y^2=u^3x^6+aux^2+b,\quad u\in \mathcal{O}^{*}_{\Q,\cW(R)}/(\mathcal{O}^{*}_{\Q,\cW(R)})^{2\ell},\end{equation*}
where $a,b$ are as in \eqref{eq:Weier}. Now we must assume that $b\ne 0$, as otherwise any $x(P)$ is a square up-to some factor of $a$. If $b\neq 0$, then each such $C'_u$ is non-singular and of genus $2$ (follows from~\cite[Exercise~2.14]{Silv-Book}). We may now apply \cite[Theorem~1.1]{R10} directly. Consequently, in all such cases, one obtains analogues of the results of Section~\ref{sec:results} for products of the $x$-coordinates of points of the form $nP+Q,~n\in (M,M+N]$, as well as for products taken over rational points of bounded canonical height.

Now let $f\in \Q(E)$ be a rational function. Mimicking the proof of Lemma~\ref{lem:divbyp} (or Lemma~\ref{lem:divbypoverallpointsbetter}), we can derive corresponding analogues of these lemmas, where the implied constant depends on $\deg(f)$, the degree of the rational map $f: E \to \mathbb{P}^1$.

On the other hand, to obtain an analogue of
Lemma~\ref{lem:powerwithsunit}, that is, an estimate for
\begin{equation*}
\sharp\, \{P\in E(\Q):
f(P)\in \mathcal{O}^{*}_{\Q,\cW(R)}\cdot \cQ_{\ell}\},
\end{equation*}
we first write $f=\frac{A(x)+yB(x)}{C(x)}$, where $A,B,C\in \mathbb{Z}[x]$ and $x,y$ are the
Weierstrass coordinate functions on $E$. Let us assume that
\begin{equation}\label{eqn:f}
2\deg A < 2\deg B + 3 \quad \text{or} \quad \deg A \ne \deg C,
\end{equation}
or in other words, we assume that $f$ has either a zero or a pole at $O$. 

Writing $P$ as in \eqref{eqn:P} with $a_P\ne 0$ and  $b_P \ne 0$ (note that only finitely many points $P$ fail to satisfy this), and expanding and clearing denominators, we obtain $f(P) = d_P^{k_f} \, r_P$, for some nonzero integer $k_f$ depending only on $f$, and some rational number $r_P$ whose numerator and denominator (in lowest terms) are both coprime to $d_P$. Therefore, we can again use Lemma~\ref{lem:powerwithsunit}, at least for any prime $\ell$ not dividing $k_f$. 

Thus, we obtain an analogue of Theorem~\ref{thm:withx} for any function $f$ satisfying \eqref{eqn:f}, rather than only $x$, and for any prime $\ell$ coprime to $k_f$. For instance, the function $f=y$ satisfies \eqref{eqn:f}, with $k_y=-3$.

\subsection{Sum of two squares among the products}\label{sec:sumofsquares}
Let us denote
\begin{align*}
D_{\square+\square,P,\cW(R)}(M,N)
   &=\sharp\, \bigl\{ n\in (M,M+N]: d_{nP} \\
   &\quad\quad\;\; \text{is a sum of two squares up to a}~\cW(R)\text{-}\mathrm{unit} \bigr\}.
\end{align*}

To estimate $D_{\square+\square,P,\cW(R)}(M,N)$ one can again argue as in Lemma~\ref{lem:sq+sqwithsunit}, adapting \cite[Theorem 1.1]{BLMN}. The only difference is that one now needs to sieve out the indices of denominators, modulo the new set of primes $\Lambda'(R)=\{q~\mathrm{prime}:2R<q\}\cap \Lambda'$, where
$$\Lambda'=\{q~\mathrm{prime}:d_{qP}~\text{is not a sum of two squares}\}.$$

Thanks to \cite[Proposition 3.19]{BLMN}, which ensures that $\Lambda'$ has a positive relative density $\omega=\omega(E,P)>0$ in the set of all primes, assuming that $P$ is in the connected component of $O$ in $E(\mathbb{R})$. Analogous to Lemma~\ref{lem:sq+sqwithsunit} and \cite[Theorem 1.1]{BLMN}, one obtains
\begin{equation}\label{eqn:[]+[]uptoW}
D_{\square+\square,P,\cW(R)}(M,N)
\ll N \left(\frac{\log \max\{R,\log N\}}{\log N}\right)^{\omega}.
\end{equation}

Again assume that we are given $s$ not necessarily distinct elliptic curves $E_1, \ldots, E_s$ as in \eqref{eq:Weier} over $\Q$ and of positive ranks. Fix an $s$-tuple $\Psf=(P_1, \ldots, P_s)$ of non-torsion points $P_i\in  E_i(\Q)$, for each $i=1,\ldots, s$. Then we denote
\begin{align*}
Y_{\square+\square,\Psf}(M,N) &= \sharp\, \{ (n_1, \ldots, n_s)\in (M,M+N]^s:\\
&\qquad\qquad y(n_1P_1) \cdots y(n_sP_s)~\text{is a sum of two squares} \}.
\end{align*}

For each $i=1,\ldots, s$, let $\omega_i=\omega(E_i,P_i)$ is as in \cite[Theorem~1.1]{BLMN}, whenever $P_i$ is in the connected component of $O_i$ of $E_i(\R)$. Moreover, let $\omega(\Psf)$ be the minimum of all such $\omega_i$. Then arguing exactly as in the proof of \eqref{eqn:generalM}, we derive the following result.

\begin{thm}\label{thm:y=[]+[]}
Let $s\ge 2$ be fixed. Then uniformly over $M \ge 0$, we have
    $$Y_{\square+\square,\Psf}(M,N)\ll N^s\( \frac{\log \log N}{\log N}\)^{c(\Psf) \, \omega(\Psf)},$$
    where $c(\Psf)$ is the number of indices $i=1,\ldots, s$ such that $P_i$ is in the connected component of $O_i$ of $E_i(\R)$.
\end{thm}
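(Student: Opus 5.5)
The plan is to mimic the proof of \eqref{eqn:generalM}, with Lemma~\ref{lem:sq+sqwithsunit} replaced by its sum-of-two-squares analogue \eqref{eqn:[]+[]uptoW}. The first step reduces from $y$-coordinates to denominators. Write $y(nP)=b_{nP}/d_{nP}^3$. A rational number is a sum of two squares exactly when its numerator and denominator, in lowest terms, are sums of two squares. Hence the product $y(n_1P_1)\cdots y(n_sP_s)$ is a sum of two squares if and only if every prime $p\equiv 3\pmod 4$ appears to even total valuation. Since the exponent $3$ is odd, the denominator contributes $\nu_p(d_{n_iP_i})$ modulo $2$.

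The second step splits the $s$-tuples, for a parameter $R$ with $\log N\le R<N$, into two types, as in \eqref{eqn:foreach}.
\begin{itemize}
\item[(i)] For every index $i$ in the set $\cC$ of those $i$ with $P_i$ in the identity component, every prime $p\equiv 3 \pmod 4$ with $\nu_p(d_{n_iP_i})$ odd lies in $\cW(R)$. Then $d_{n_iP_i}$ is a sum of two squares up to a $\cW(R)$-unit, so by \eqref{eqn:[]+[]uptoW} each such coordinate has at most $N(\log R/\log N)^{\omega_i}$ choices. The coordinates $i\notin\cC$ are counted trivially. This gives a total of $\ll N^s(\log R/\log N)^{c(\Psf)\omega(\Psf)}$.
\item[(ii)] For some $i\in\cC$ there is a prime $p\notin\cW(R)$, $p\equiv 3 \pmod 4$, with $\nu_p(d_{n_iP_i})$ odd.
\end{itemize}

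The third step handles tuples of type (ii). For the product to be a sum of two squares, the odd valuation at $p$ must be compensated: either $p$ divides some other $d_{n_jP_j}$ with $j\ne i$, or $p$ divides some numerator $b_{n_jP_j}$. The first case is exactly the edge condition of the earlier proofs. Fixing $n_i$, and noting $p\mid d_{n_iP_i}$ with $\rho_{j,p}>R$, Lemma~\ref{lem:divbyp} gives $O(N/R+1)$ choices of $n_j$. Summing over the $O(\log N)$ primes dividing $d_{n_iP_i}$ costs at most a factor $\log N$, which is harmless after enlarging $R$. The total contribution is then $N^s(\log N)/R$.

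The main obstacle is the second case, where $p$ is compensated by a numerator $b_{n_jP_j}$. Lemma~\ref{lem:divbyp} controls $\nu_p(x(nP+Q))\ne0$, and this includes zeros of $x$ but not zeros of $y$. The plan here is to apply the analogue of Lemma~\ref{lem:divbyp} for the rational function $y$, noted in Section~\ref{sec:CcapH}. The points with $p\mid b_{nP}$ reduce to the finitely many $2$-torsion classes modulo $p$, so they lie in $O(1)$ residue classes modulo the order of $P$ mod $p$. The order of $P$ modulo $p$ is a multiple of $\rho_p>R$, since $p\mid d_{n_iP_i}$ and $p\notin\cW(R)$. This again bounds the number of such $n_j$ by $O(N/R+1)$.

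Combining both types, the count is
\[
Y_{\square+\square,\Psf}(M,N)\ll N^s\Bigl(\frac{\log R}{\log N}\Bigr)^{c(\Psf)\omega(\Psf)}+\frac{N^s\log N}{R}.
\]
Choosing $R=(\log N)^{s+2}$ balances the two terms and yields the stated bound with $\log\log N$.
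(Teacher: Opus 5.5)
Your architecture is the one the paper uses (it argues ``exactly as in the proof of \eqref{eqn:generalM}''). Tuples in which every $d_{n_iP_i}$ with $P_i$ in the identity component is a sum of two squares up to a $\cW(R)$-unit are counted coordinatewise by \eqref{eqn:[]+[]uptoW}. The remaining tuples are counted via Lemma~\ref{lem:divbyp}, and $R$ is taken to be a power of $\log N$. You also note that an odd power of $p\equiv 3\pmod 4$ in $d_{n_iP_i}$ can instead be cancelled by a numerator $b_{n_jP_j}$. Your treatment of that case is correct: $n_jP_j$ then reduces to a nonzero $2$-torsion point modulo $p$, so $n_j$ lies in at most three classes modulo $\rho_{j,p}>R$. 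This makes explicit a point the paper leaves implicit.

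The step that fails is ``summing over the $O(\log N)$ primes dividing $d_{n_iP_i}$''. No such bound holds. Since $d_{rP}\mid d_{nP}$ for $r\mid n$, and $d_{rP}$ has a primitive prime divisor for all large $r$ (Silverman), one gets $\omega(d_{nP})\ge \tau(n)-O(1)$. This exceeds every fixed power of $\log N$ for suitable $n\le N$. Unconditionally one only knows $\omega(d_{nP})\ll n^2/\log n$, and since the bound must be uniform in $M$, $n_i$ may be huge compared with $N$. So a union bound over all $p\mid d_{n_iP_i}$ introduces an uncontrolled factor, not a factor $\log N$.

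The fix is to avoid summing over primes altogether. For $n_i$ of the second type, let $p^*(n_i)$ be the \emph{least} prime $p\equiv 3\pmod 4$ with $p\notin\cW(R)$ and $\nu_p(d_{n_iP_i})$ odd.
\begin{itemize}
\item Since $\gcd(b_{n_iP_i},d_{n_iP_i})=1$, this prime must divide $b_{n_jP_j}d_{n_jP_j}$ for some $j\ne i$.
\item It is a function of $n_i$ alone, so for fixed $n_i$ your two residue-class arguments give $O(N/R+1)$ choices of $n_j$.
\item Primes of bad reduction lie in $\cW(R)$ once $R$ is large, since every $\rho_{k,p}$ is finite, so your $2$-torsion argument applies to $p^*(n_i)$.
\end{itemize}
This gives $\ll N^s/R$ for the second type with no logarithmic loss, which is the bound the paper uses. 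Taking $R=(\log N)^s$ then concludes as in the paper. Only one factor of $R$ needs to be saved here, which is why a single canonically chosen prime suffices and no graph covering is needed.
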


In particular, Theorem~\ref{thm:y=[]+[]} extends~\cite[Theorem~1.1]{BLMN} to $s\ge 2$.

\section{Concluding remarks}
\subsection{An illustration of the admissible exponents}\label{sec:admexponents} 
Note that the conclusion of Theorem~\ref{thm:prod} also holds for equations of the form
$$
d_{n_1P_1 + Q_1}^{k_1} \cdots d_{n_sP_s+Q_s}^{k_s} \in \cN_{m},
$$
provided that there exists a prime $\ell \mid m$, which does not divide $k_i$ for any $i=1,\ldots,s$. It is now natural to ask how many $(s+1)$-tuples $(k_1,\ldots,k_{s},m)$ of integers do not satisfy such a condition, with $|k_i|\le N$ for each $i=1,\ldots,s$ and $2\le m\le N$. For a parameter $2\le y\le N$ to be chosen suitably, a crude estimation is given by 
\begin{equation}\label{eqn:expsbound}
N^{s}\Psi(N,y)+N^{s+1}\sum_{\substack{y< \ell\\ \ell~\mathrm{prime}}} 
\frac{1}{\ell^2}\ll N^{s}\Psi(N,y)+\frac{N^{s+1}}{y \log y},
\end{equation}
where again, $\Psi(N,y)$ denotes the number of $y$-smooth positive integers up to $N$. In particular one easily sees that the quantity in \eqref{eqn:expsbound} is $o(N^{s+1})$, for instance by \eqref{eqn:psinrbound} and taking $y$ as in \eqref{eqn:newR}.

Note that Theorem~\ref{thm:prod} also applies to the $(s+1)$-tuple of exponents $(k_1,\ldots,k_s,m)$, for which there exists a prime $\ell \mid m$ such that $\ell \nmid k_i$ for at least one $1\le i \le s$. Indeed, let $t$ be the number of such indices $1\le i\le s$ such that $\ell \nmid k_i$. Then by trivially fixing $n_j\in (M,M+N]$ for each of the remaining $s-t$ indices $j$, we save at least $(\log N)^{\lceil t/2 \rceil/3}$ for $t\ge 2$ and $N$ when $t=1$, compared to the trivial bound of order $N^s$.

\subsection{Difficulties in obtaining a power saving}\label{sec:bottleneck}
An interesting open problem is whether the savings obtained in Theorem~\ref{thm:prod} can be substantially improved. Of course, when $s=1$, there are only $O(1)$ such values of $n_1$. This indicates that one should hope for a power saving. The bottleneck arises from the bound for $D_{\ell,\cW}$ in Lemma~\ref{lem:powerwithsunit}. Perhaps one cannot generally do better than $\exp(O(\sharp\, \cW))$, because there are $\mathcal{O}^{*}_{\Q,\cW}/(\mathcal{O}^{*}_{\Q,\cW})^{2\ell}$ equations to consider in \eqref{eqn:allequations}. Moreover, since $1$ is trivially an $\ell$th power, the Siegel-type bounds (see \cite[Theorem 3.6, Chapter~IX]{Silv-Book}) perhaps suggest that such an estimate is essentially best possible.

However, even assuming a bound of order $\exp(O(\sharp\,\cW))$ in Lemma~\ref{lem:powerwithsunit}, we only save an additional power of $\log\log N$ in Theorem~\ref{thm:prod} and a power of $\log \log H$ in Theorem~\ref{thm:prodoverallpts}. Nevertheless, it may be of independent interest in its own right to study the family of curves $C_{u,v,\ell}$ defined in \eqref{eqn:Cuvell}. Motivated by this, we pose the following question.
\begin{question}
    Let $\cW_K$ and $\cW$ be as in Lemma~\ref{lem:C'u}. Do we have
  \[\sum_{u,v \in \mathcal{O}^{*}_{K,\cW_K}/(\mathcal{O}^{*}_{K,\cW_K})^{2\ell}}\sharp\,C_{u,v,\ell}(K)   \le \exp\(O(\sharp\,\cW)\),
  \]
where the implied constant depends only on $\ell$ and $K$? 
\end{question}

It is clear that our argument gives a power saving in \eqref{eqn:M0}, when Lemma~\ref{lem:sq+sqwithsunitm0} saves a power with $R=N^{\varepsilon}$, for some fixed $\varepsilon>0$. This requires a better understanding of $d_{nP}$ when $n$ is $N^{\varepsilon}$-smooth but has a moderate prime factor; for example, in the range $\big((\log N)^K,N^{\varepsilon}\big)$ for some fixed and large $K$. On the other hand, by Silverman’s primitive divisor theorem, for any integer $1\le \Omega<1/3\varepsilon$, we have
\[
\sharp\, \{n\in\mathbb{N}: d_{nP}\in \mathcal{O}^{*}_{\Q,\cW\(N^{\Omega\varepsilon}\)}\}\ll N^{3\Omega \varepsilon}.
\]

So for any $n$ not as above, we get a divisor $r_p>N^{\Omega\varepsilon}$ of $n$. Then $n$ is not $N^{\varepsilon}$-smooth, when $n$ has at most $\Omega$ prime factors, counted with multiplicity. Thus, Lemma~\ref{lem:sq+sqwithsunitm0} is considerably simpler in such cases.

A recent preprint of Kym~\cite{Kym26} studies $s$-tuples of integers $(n_1,\ldots,n_s)$ for which
\[
d_{n_1P}\cdots d_{n_sP}\notin \mathcal{N}_{\ell},
\]
for a fixed prime $\ell$; see, for example, Theorem~5.5 and the subsequent results, including Corollary~5.6, Proposition~5.7, and Corollary~5.8 therein. These results are proved under some additional conditions, such as the divisibility of $d_P$ by $2$ or $3$, or under the weaker Hypothesis~4.4 of \cite{Kym26}. As already mentioned in Section~\ref{sec:strats}, our Lemma~\ref{lem:powerwithsunit} removes the need for this additional hypothesis, since it gives an unconditional version of \cite[Proposition~4.1]{Kym26}.

\subsection{A power saving under the abc-conjecture}\label{sec:abc}
Recall the well-known $abc$-conjecture (over $\Q$) of Masser and Oesterlé, as mentioned in \cite[Page 226]{Silv88}. Then the proof of \cite[Theorem 2]{Silv88} gives the following.

\begin{lem}\label{lem:strongerzsigmondy}
    Assume that the abc-conjecture over $\Q$ is true and the $j$-invariant of $E$ is either $0$ or $1728$. Then there exists a constant $C(E,P)>0$ such that for any $n>C(E,P)$, there exists a primitive prime divisor $p$ of $d_{nP}$, such that $\nu_p(d_{nP})=1$.
\end{lem}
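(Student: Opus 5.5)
We follow Silverman's argument for \cite[Theorem 2]{Silv88}, keeping track of the exponent of a primitive prime. Write $d_n=d_{nP}$ and recall $\log d_n = \h(P) n^2 + O(\log n)$; this follows from the standard comparison of the naive and canonical heights together with Siegel/David type lower bounds on elliptic logarithms (here we may also absorb the archimedean contribution). Let $D_n$ be the part of $d_n$ supported on primes $p$ with $\rho_p=n$, the primitive part. By the divisibility structure of elliptic divisibility sequences, $d_m\mid d_n$ for $m\mid n$, and for $p\nmid d_P$ of good reduction we have $\nu_p(d_{mP})=\nu_p(d_{\rho_pP})+\nu_p(m/\rho_p)$ \cite[Lemmas 2.3, 2.4]{BLMN}. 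Hence
\[
\log\frac{d_n}{D_n}\le \sum_{q\mid n}\log d_{n/q}+O(\log n)\cdot\omega(n)\le \h(P)\,n^2\sum_{q\mid n}q^{-2}+O(\log n)^2,
\]
so $\log D_n\ge c\,\h(P) n^2$ with $c=1-\sum_q q^{-2}>0.54$, as in Silverman.

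The key step is to show that $D_n$ is not squarefull up to a bounded factor, using $abc$ and the special shape $j\in\{0,1728\}$. For $j=0$ the curve is $y^2=x^3+b$, so $b_{nP}^2-a_{nP}^3=b\,d_n^6$. For $j=1728$ it is $y^2=x^3+ax$, giving $b_{nP}^2=a_{nP}(a_{nP}^2+a\,d_n^4)$; here the coprimality of $a_{nP}$ and $d_n$ lets us split off a three-term equation such as $u^2-a_{nP}^2=a\,d_n^4$ with $a_{nP}=\pm u_1^2\cdot(\text{bounded})$, or equivalently we work with the relation $a_{nP}^2+a d_n^4=\square\cdot(\text{bounded})$. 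In either case $abc$ with exponent $1+\varepsilon$ bounds the height of the equation by the radical. In the $j=0$ case this gives
\[
\max(|a_{nP}|^3,b_{nP}^2)\ll_\varepsilon \bigl(\rad(a_{nP})\rad(b_{nP})\rad(d_n)\bigr)^{1+\varepsilon}\le \bigl(|a_{nP}|\,|b_{nP}|\,\rad(d_n)\bigr)^{1+\varepsilon}.
\]
Since $|a_{nP}|\asymp d_n^2\,|x(nP)|$ and the heights satisfy $\log|a_{nP}|\approx 2\log d_n$ and $\log|b_{nP}|\approx 3\log d_n$ up to $O(\log n)$, the exponents balance to $6\log d_n\le(1+\varepsilon)(5\log d_n+\log\rad d_n)+O(\log n)$. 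This yields $\log\rad(d_n)\ge(1-O(\varepsilon))\log d_n$, i.e.\ the squarefull part of $d_n$ has size $d_n^{O(\varepsilon)}$. The $j=1728$ case is analogous, with the weights $4$ and $2$.

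Finally we combine these. Write $D_n=S_nT_n$ with $T_n$ the product of primes with $\nu_p(D_n)\ge2$. Since $T_n$ is squarefull and divides the squarefull part of $d_n$, we get $\log T_n\le O(\varepsilon)\h(P)n^2$. Because $\log D_n\ge c\,\h(P)n^2$, choosing $\varepsilon$ small gives $S_n>1$ for $n>C(E,P)$, and any prime of $S_n$ is the required primitive $p$ with $\nu_p(d_{nP})=1$.

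The main obstacle is the second step. The error terms from the archimedean place, meaning the size of $|x(nP)|$ when $nP$ is near $O$ or near a $2$-torsion point, must be controlled by $O(\log n)$ via David's bound on linear forms in elliptic logarithms. For $j=1728$ one also has to set up the $abc$ triple with bounded gcd.
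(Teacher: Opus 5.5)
Your proposal is correct and follows essentially the same route as the paper. The paper gives no separate argument and defers to Silverman's proof of \cite[Theorem~2]{Silv88}: $abc$ applied to the three-term relation coming from $y^2=x^3+b$ or $y^2=x^3+ax$ shows that the squarefull part of $d_{nP}$ is $d_{nP}^{O(\varepsilon)}$, and this is then compared with the primitive part, whose logarithm is $\gg n^2$.

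Two minor remarks:
\begin{itemize}
\item The archimedean error only needs to be $o(n^2)$, which Siegel's theorem already provides, so you need not insist on $O(\log n)$ from David's bound.
\item For $j=1728$ the saving really does require $a_{nP}=\pm e\,u_1^2$ with $e\mid a$, so that $\rad(a_{nP})\ll |a_{nP}|^{1/2}$. The relation $a_{nP}^2+a\,d_{nP}^4=\square\cdot(\text{bounded})$ by itself yields nothing under $abc$, so your ``or equivalently'' alternative would not work on its own.
\end{itemize}
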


Now assume the hypotheses of Lemma~\ref{lem:strongerzsigmondy}. Let $s,\ell \ge 2$ be fixed. Then as a consequence of Lemma~\ref{lem:strongerzsigmondy}, uniformly over $M\ge 0$, we have 
\begin{equation}\label{eqn:Nsaving}
\sharp\, \cD_{\ell,\Psf,\isf}(M,N) \ll N^{s-1}.
\end{equation}

The proof of \eqref{eqn:Nsaving} is straightforward. Just note that for any fixed tuple $(n_2,\ldots, n_{s})\in (M,M+N]^{s-1}$, we have
    $$\sharp\left\{n_1\in \N:d_{n_1P_1}^{k_1}\in d_{n_2P_2}^{-k_2} \cdots d_{n_sP_s}^{-k_s}\cdot \cN_{\ell}\right\}\le C(E,P)+1.$$
Indeed, Lemma~\ref{lem:strongerzsigmondy} implies that for any such fixed tuple $(n_2,\ldots, n_{s})$, we have at most one $n_1>C(E,P)$ satisfying $d_{n_1P_1}^{k_1}\in d_{n_2P_2}^{-k_2} \cdots d_{n_sP_s}^{-k_s}\cdot \cN_{\ell}$, since we assume that $k_1\in [1,\ell]$.    

Furthermore, a generalisation of Lemma~\ref{lem:strongerzsigmondy} dropping the condition on the $j$-invariant is possible, using Kühn and Müller's~\cite[Theorem~1.1]{KM15}, under the assumption of the \textit{uniform abc-conjecture}~\cite[Conjecture~2.2]{KM15}. Note that the $abc$-conjecture over $\Q$ is a special case of the uniform $abc$-conjecture, as mentioned in \cite[Remark~2.3]{KM15}.

It is now natural to ask whether one can obtain a power saving for $\sharp\, \cD_{\ell,\Psf,\Qsf}(M,N)$, for any arbitrary $s$-tuple of rational points $\Qsf$. In this setting, the primitive divisor theorem for the sequence $d_{nP+Q}$ is currently known only when $Q$ is a torsion point, thanks to Verzobio~\cite{Ver20,Ver21,Ver23}. However, to obtain a power saving for $\sharp\, \cD_{\ell,\Psf,\Qsf}(M,N)$, it suffices to have a good lower bound on the \textit{powerless part} of $d_{nP+Q}$.

Assuming the $abc$-conjecture over $\Q$, such a lower bound can be extracted from the proof of \cite[Theorem~2]{Silv88} in the case where the $j$-invariant is $0$ or $1728$, with \cite[Lemma~3.1]{BBHOS26} playing a crucial role in the argument. To treat the cases of arbitrary $j$-invariants, one may again use the uniform $abc$-conjecture.

\section*{Acknowledgements}

The author sincerely thanks Philip Habegger for suggesting useful tools for bounding the rank of Jacobians, and Alina Ostafe for bringing \cite{DGH21} to the author’s attention, which led to a simplification and improvement of a preliminary bound. He is also very grateful to Igor E. Shparlinski for reading the manuscript, providing many helpful remarks, and drawing attention to \cite{BKS21}, which is one of the key motivations for this work. The author also sincerely thanks Simon L. Rydin Myerson for many helpful suggestions. Further thanks go to Lajos Hajdu and Shanta Laishram for related discussions. Some parts of this work have been carried out during a visit to ISI Delhi, and the author is grateful to Shanta Laishram for the hospitality.

The author has been supported by the Australian Research Council Grant DP230100530.

\end{document}